\newcommand{\imag}{\mathrm{i}}
\newcommand{\mathe}{\mathrm{e}}
\newcommand{\mathd}{\mathrm{d}}
\newcommand{\mydim}{n}
\newcommand{\mysmall}{\epsilon}
\newcommand{\myreg}{\mathcal{R}}
\newcommand{\myflow}{\varphi_{\delta}}
\newcommand{\vecx}{\bm{x}}
\newcommand{\myre}{\mathrm{Re}}
\newcommand{\myspin}{\mathcal{S}}
\newcommand{\myconst}{\varphi_*}
\newcommand{\mypsivec}{\pmb{\psi}}
\newcommand{\myonevec}{\mathbf{1}}
\newcommand{\myfvec}{\pmb{f}}
\newcommand{\mydiff}{\pmb{\mathcal{D}}}
\newcommand{\myjac}{\pmb{\mathcal{J}}}
\newcommand{\myS}{\pmb{\mathcal{S}}}
\newcommand{\mypsivecn}{\pmb{\psi}^n}
\newcommand{\mypsivecnpone}{\pmb{\psi}^{n+1}}
\newcommand{\myFvec}{\pmb{F}}
\newcommand{\myk}{j}
\newcommand{\mychat}{a}
\newtheorem{remark}{Remark}
\newtheorem{theorem}{Theorem}
\newenvironment{proof}[1][Proof]{\begin{trivlist}
\item[\hskip \labelsep {\bfseries #1}]}{\end{trivlist}}
\newcommand{\myqed}{\nobreak \ifvmode \relax \else
      \ifdim\lastskip<1.5em \hskip-\lastskip
      \hskip1.5em plus0em minus0.5em \fi \nobreak
      \vrule height0.75em width0.5em depth0.25em\fi}
\begin{document}
\title{Travelling-wave spatially periodic forcing of asymmetric binary mixtures}
\author{Lennon \'O N\'araigh\footnote{Email address: \texttt{onaraigh@maths.ucd.ie}}}
\address{School of Mathematics and Statistics, University College Dublin, Belfield, Dublin 4, Ireland}


\date{\today}

\begin{abstract}
We study travelling-wave spatially periodic solutions of a forced Cahn--Hilliard equation.  This is a model for phase separation of a binary mixture, subject to  external forcing.  We look at arbitrary values of the mean mixture concentration, corresponding to asymmetric mixtures (previous studies have only considered the symmetric case).   We characterize in depth one particular solution which consists of an oscillation around the mean concentration level, using a range of techniques, both numerical and analytical.  We determine the stability of this solution to small-amplitude perturbations.  Next, we use methods developed elsewhere in the context of shallow-water waves to uncover a (possibly infinite) family of multiple-spike solutions for the concentration profile, which linear stability analysis demonstrates to be unstable.  Throughout the work, we perform thorough parametric studies to outline for which parameter values the different solution types occur.
\end{abstract}

\maketitle

\noindent Keywords: Multiphase flows; Phase separation; Nonlinear dynamics

\section{Introduction}
\label{sec:intro}

When a binary fluid in which both components are initially well mixed undergoes rapid cooling below a critical temperature, both phases spontaneously separate to form domains rich in the fluid's component parts. The domains expand over time in a phenomenon known as coarsening~\cite{Bray_advphys}.  The coarsening can be modified and controlled by external influences, including stirring by an externally-imposed flow~\cite{naraigh2015flow,shear_Bray}, or external heating of the fluid.  The focus of this work is on the latter.  In particular, we look at the Ludwig--Sorret effect, whereby fluctuations in the binary-fluid concentration are produced via thermal diffusion~\cite{craig2004thermal}.  

Theoretical and experimental study of phase separation is justified on several grounds.  
Phase separation of binary liquids has many practical applications, especially in the fabrication of electro-optical devices~\cite{kukadiya2009ludwig} and thin-film coating~\cite{wilczek2015modelling}.  
The phenomenon is of interest from the scientific point of view, where it appears in surprising contexts, for instance, in the evaporation of sessile droplets of binary  mixtures, where the anisotropic droplet curvature produces a correspondingly asymmetric pattern of phase separation~\cite{saenz2017dynamics}.
Finally,  simplified theoretical models of phase separation (such as the Cahn--Hilliard equation~\cite{CH_orig}) have interesting mathematical properties which have faciliated a complete classification of the model solutions in certain circumstances~\cite{Elliott_Zheng}.

The basic mathematical model of phase separation used in this work is the Cahn--Hilliard equation, wherein a single scalar concentration field $C(\vecx,t)$ can be used to fully characterize the binary mixture~\cite{CH_orig}.  As such, a concentration level $C=\pm 1$ indicates phase separation of the mixture into one or other of its component parts, while $C=0$ denotes a perfectly mixed state.  It is further assumed that the system is in  the spinodal region of the thermodynamic phase space, where the well-mixed state is energetically unfavorable. Consequently, the free energy for the mixture can be modeled as $F[C]=\int_\Omega \left[(1/4)(C^2 -1)^2 +(1/2)\gamma|\nabla C|^2\right] \mathd^\mydim x$, where the first term promotes demixing and the second term smooths out sharp gradients in transition zones between demixed regions; also, $\gamma$ is a positive constant, $\Omega$ is the container where the binary fluid resides, and $\mydim$ is the dimension of the space. The twin constraints of mass conservation and energy minimization suggest a gradient-flow dynamics for the evolution of the concentration: $\partial_t C = \nabla\cdot\left[D(C)\nabla(\delta F/\delta C)\right]$, where $\delta F/\delta C$ denotes the functional derivative of the free energy and $D(C)\geq 0$ is the mobility function, assumed for simplicity in this work to be a positive constant.  As such, the basic model equation reads
\begin{equation}
\frac{\partial C}{\partial t}=D\nabla^2\left(C^3-C-\gamma\nabla^2 C\right).
\label{eq:ch_basic}
\end{equation}

The basic mathematical model~\eqref{eq:ch_basic} can be modified in numerous ways to take account of the various external influences that can be imposed on the physical system so as to control the phase separation.  In this work, we focus on the Ludwig--Sorret effect, whereby concentration fluctuations are induced by an externally-imposed temperature gradient.  Mathematically, this amounts to adding a source term to the right-hand side of Equation~\eqref{eq:ch_basic}.
To develop a concise mathematical description of such controlled phase separation, we focus for simplicity on a one-dimensional version of Equation~\eqref{eq:ch_basic}, with a source term that takes the form of a travelling wave:
\begin{equation}
\frac{\partial C}{\partial t}=D\partial_{xx}\left(C^3-C-\gamma\partial_{xx} C\right)+f_0k\cos[k(x-vt)],
\label{eq:ch_tw}
\end{equation}
where $k$ is the forcing wave number, $f_0k$ is the forcing amplitude,  and $v$ is the velocity of the travelling wave.  Also, the fluid container is taken as $\Omega=\mathbb{R}$ in an abstract setting, although this will be restricted in what follows.  Such travelling-wave forcing has been studied before for symmetric binary mixtures wherein the spatial average $\langle C\rangle$ of the concentration is zero~\cite{weith2009traveling}.  Therefore, the main contribution of the present work is to extend this prior work by looking at $\langle C\rangle\neq 0$.  Indeed, we demonstrate that $\langle C\rangle$ is a  crucial parameter which can be used to control the phase separation, along with  $(f_0,v,D,k,L)$.

\section{Problem Statement and Methodology}
\label{sec:problem}

We seek solutions of Equation~\eqref{eq:ch_tw} that inherit the spatiotemporal structure of the forcing term.  As such, we seek spatially-periodic travelling wave solutions
\begin{equation}
C(x,t)=\psi(\eta),\qquad \eta=x-vt,\qquad \psi(\eta+L)=\psi(\eta),
\label{eq:ch_twx}
\end{equation}
where $L=2\pi/k$ is the periodicity of the forcing.  The trial solution~\eqref{eq:ch_twx} is substituted into Equation~\eqref{eq:ch_tw} to produce
\begin{equation}
-v \frac{\mathd\psi}{\mathd\eta}=D\frac{\mathd^2}{\mathd\eta^2}\left(\psi^3-\psi-\gamma\frac{\mathd^2\psi}{\mathd\eta^2}\right)+f_0k\cos(k\eta).
\label{eq:ch_tw1}
\end{equation}
Equation~\eqref{eq:ch_tw1} is integrated once and the periodic boundary conditions are used to determine the resulting constant of integration.  This yields
\begin{equation}
\gamma D \frac{\mathd^3\psi}{\mathd\eta^3}=
D\frac{\mathd}{\mathd\eta}\left(\psi^3-\psi\right)+v\left(\psi-\langle \psi\rangle\right)+f_0\sin(k\eta),
\label{eq:ch_tw2}
\end{equation}
where
\[
\langle \psi\rangle=\frac{1}{L}\int_0^L \psi(\eta)\mathd\eta
\]
is the mean value of the concentration.  Therefore, the problem statement and the main aim of this paper is to characterize the solutions of Equation~\eqref{eq:ch_tw2}.

A key special-case solution of Equation~\eqref{eq:ch_tw2} occurs when $f_0=v=\langle\psi\rangle=0$, whereupon Equation~\eqref{eq:ch_tw2} can be integrated to give the solution
\begin{equation}
\psi(\eta)=\tanh\left(\frac{\eta-\eta_0}{\sqrt{2\gamma}}\right),
\label{eq:mytanh}
\end{equation}
where $\eta_0$ is an arbitrary constant.  This is a known equilibrium solution of the full temporally-evolving equation~\eqref{eq:ch_tw} with $f_0=0$.  Indeed, the dynamics of Equation~\eqref{eq:ch_tw} (with $f_0=0$) is such that  
 an arbitrary mean-zero initial condition will rapidly evolve into a concentration profile comprising extended regions where $C\approx \pm 1$, separated by a $\tanh$-like transition zone such as~\eqref{eq:mytanh}.  The extended regions $C\approx \pm 1$ subsequently interact and merge (subject to the constraint that the mean concentration is conserved).  In a nutshell, this is the coarsening dynamics of the (unforced) Cahn--Hilliard equation.  Although this brief study of the classic $\tanh$-solution~\eqref{eq:mytanh} would appear incidental to the present study on the corresponding forced Cahn--Hilliard equation, the $\tanh$ profile is of key importance to constructing multiple-spike solutions of the forced Cahn--Hilliard equation, which we develop in this work.

We notice further that Equation~\eqref{eq:ch_tw2} has a large set of different parameters.  Throughout this work, we will employ various techniques to reduce the number of parameters down to a minimum of independent parameters.   This will enable us to carry out a comprehensive parameter study outlining the different possible solution behaviours as the independent parameters are varied.  As a starting-point of this reduction, we make the following remark:
\begin{remark}
\label{thm:symmetry}
If $\psi(\eta)$ is a smooth $L$-periodic solution of Equation~\eqref{eq:ch_tw1} with mean $c_0$, then $\widehat{\psi}(\eta)=-\psi(\eta+L/2)$ is a smooth $L$-periodic solution with mean $-c_0$.
\end{remark}
This can be shown by direct computation.  As a result, it suffices in any parameter study to focus on the case with $\langle \psi\rangle\geq 0$, since cases with $\langle \psi\rangle<0$ can be obtained by symmetry.  
We furthermore focus on a parameter regime where $\mysmall=\gamma/L^2\rightarrow 0$, which is physically representative of binary-fluid systems~\cite{LowenTrus}.  Two limiting cases of Equation~\eqref{eq:ch_tw2} then occur:
\paragraph*{1.  The Regular Limit}  In this limit, a regular perturbation theory $\psi=\psi_0(\eta)+\epsilon\psi_1(\eta)+\cdots$ is admissible, in which case Equation~\eqref{eq:ch_tw2} reduces to a first-order ODE, in the lowest order in the perturbation theory:
\begin{equation}
0=
D\frac{\mathd}{\mathd\eta}\left(\psi_0^3-\psi_0\right)+v\left(\psi_0-\langle \psi\rangle\right)+f_0\sin(k\eta),
\label{eq:ch_reg}
\end{equation}
Notice that Remark~\ref{thm:symmetry} carries over to this case.  We refer to Equation~\eqref{eq:ch_reg} as the `reduced-order model'.
\paragraph*{2.  The Singular Limit} In this limit, the regular perturbation theory~\eqref{eq:ch_reg} breaks down, higher-order derivatives become important, and the `full model' (i.e. Equation~\eqref{eq:ch_tw2}) is required.

Examples (based on numerical solutions) of both limits are shown in Figure~\ref{fig:examples}.
\begin{figure}
  \subfigure[$\,\, f_0=0.12$]{\includegraphics[width=0.48\textwidth]{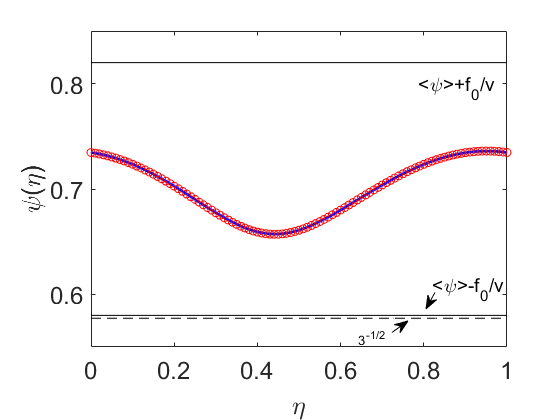}}
  \subfigure[$\,\, f_0=0.24$]{\includegraphics[width=0.48\textwidth]{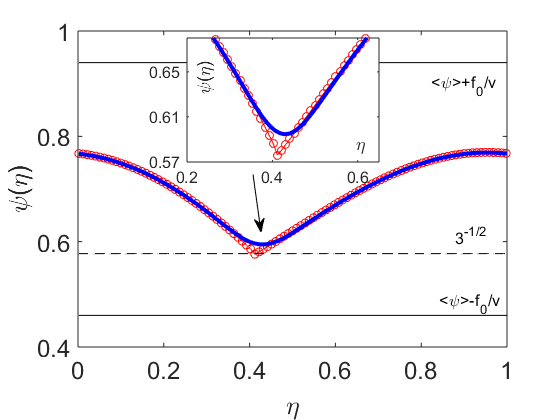}}
  \caption{Sample $L$-periodic numerical solutions of the full model  (Equation~\eqref{eq:ch_tw2}, solid line, with $\epsilon=10^{-4}$) and the reduced-order model (Equation~\eqref{eq:ch_reg}, circles). 
  The following parameters are the same in both panels: $\langle \psi\rangle=0.7$, $v=D=L=1$, $k=2\pi$.  The inset in panel (b) is an enlargement of the main figure which shows the formation of the cusp in more detail.
	Details of the numerical method are provided below at the foot of this section (Section~\ref{sec:problem}) and also in~\ref{app:methodology}.
  }
  \label{fig:examples}
\end{figure}
Panel (a) corresponds to a case where the regular perturbation theory holds and the resulting reduced first-order model~\eqref{eq:ch_reg}
are valid.  Hence, for an appropriate (small) value of $\epsilon$, there is very close agreement between the full third-order problem and the first-order model solution (the first-order model corresponds to $\epsilon\rightarrow 0$).
On the other hand, panel (b)  corresponds to a case where the regular perturbation theory is on the verge of breaking down, such that a cusp in the concentration profile forms at $\psi=1/\sqrt{3}$.  
In the region close to the cusp, there is significant disagreement between the third-order and first-order models.  Outside of this region, the agreement
between the two models remains close.

Summarizing, the plan of the paper is as follows.
We first of all obtain \textit{a priori} necessary conditions on the parameter values $(f_0,\langle \psi\rangle,v,D,k,L)$ for which the reduced-order model is valid.  This is accomplished below in Section~\ref{sec:reduced} using methods from Functional Analysis (specifically, Fixed Point Theorems).
We explore further parameter regions (i.e. beyond where the above \textit{a priori} theory is valid) where the reduced order model is still valid.  This is done using numerical solutions of Equation~\eqref{eq:ch_tw2} and~\eqref{eq:ch_reg}.
We characterize the linear stability of the reduced-order model.  This is accomplished below in Section~\ref{sec:stability} using linear stability analysis.  As such, a solution of Equation~\eqref{eq:ch_tw} consisting of a reduced-order travelling wave and a time-dependent perturbation is studied.  The perturbation is computed using Bloch's Theorem / Floquet Analysis.  A further  \textit{a priori} condition for the growth rate of the perturbation to be negative is computed from upper bounds of various integrals. 
Finally, we explore the last remaining regions of the parameter space wherein the reduced-order model is no longer valid.  As such, we consider the full model equation~\eqref{eq:ch_tw2}. We construct various travelling-wave solutions numerically, and determine their stability.  Several solution types emerge in this manner, only one of which resembles the solution type (and corresponding concentration profile) found in the reduced-order model.  
The numerically-constructed travelling-wave solutions of the full model are carefully checked against temporally-evolving numerical simulations (TENS) of equation~\eqref{eq:ch_tw}.  For stable parameter cases, the travelling waves emerge from temporal simulations with random initial conditions.
For the full model, we construct a `flow-pattern map' outlining the parameter regimes where the various travelling-wave solutions are found.  This approach is inspired by the literature on Multiphase Flow for Engineering applications~\cite{Hewitt1982}, where distinct flow regimes are mapped out as a function of the flow parameters.

The computational methodology is therefore severalfold.  We use a Newton iterative solver with linesearch to compute numerical travelling-wave solutions of Equation~\eqref{eq:ch_tw2} and~\eqref{eq:ch_reg}.  Physical intuition gleaned in Sections~\ref{sec:reduced} and~\ref{sec:full} is used to construct initial conditions for the solver.
A complementary approach to finding the travelling-wave solutions is also used for the reduced-order model.  In this complementary approach, we solve Equation~\eqref{eq:ch_reg} numerically using an eighth-order accurate Runge-Kutta scheme~\cite{ode87}. The periodic boundary conditions are imposed numerically using a `shooting' method.
Finally,  we use temporally-evolving numerical simulations (TENS) of equation~\eqref{eq:ch_tw}.  These simulations are accomplished using a pseudospectral numerical method based on Reference~\cite{naraigh2007bubbles}.
These methods are documented and validated extensively in~\ref{app:methodology}.

\section{The Reduced-Order Model}
\label{sec:reduced}

We determine parameter regimes wherein the reduced-order~\eqref{eq:ch_reg} is valid.  The approach is twofold: we use numerical solutions to map out a parameter space where the reduced-order model is valid.  Then, using analytical techniques in certain limiting cases, we characterize these solutions rigorously.


\subsection{Periodic solutions -- quantitative analysis}
\label{eq:analytical}

We first of all look at the case where $f_0\rightarrow 0$, such that a second application of regular perturbation theory may again be used, with
\begin{equation}
\psi=\langle \psi \rangle + f_0\varphi_1(\eta)+O\left( f_0^2\right),
\label{eq:ch_small0}
\end{equation}
where $\varphi_1$ satisfies 
\begin{equation}
D\frac{\mathd\varphi_1}{\mathd \eta}=-\frac{v}{3\langle \psi\rangle^2-1}\varphi_1
-\frac{1}{3\langle \psi\rangle^2-1}\sin(k\eta).
\label{eq:ch_small}
\end{equation}
We further require that $3\langle \psi\rangle^2-1\neq 0$.
Equation~\eqref{eq:ch_small} is a standard first-order linear ordinary differential equation.  The solution is made up of two parts.  The homogeneous part can be written as $\myconst\mathe^{-\kappa \eta}$, where $\kappa=(v/D)(3\langle \psi\rangle^2-1)^{-1}$, and $\myconst$ is a constant of integration.  The particular integral can be written as $\alpha \sin (k\eta)+\beta\cos(k\eta)$, where $\alpha$ and $\beta$ are constants chosen such that Equation~\eqref{eq:ch_small} is satisfied.  The particular integral is intrinsically $L$-periodic, whereas the homogeneous solution is $L$-periodic only when $\myconst=0$.
%
%
Hence,
\begin{equation}
\varphi_1(\eta)=\frac{\kappa^2}{k^2+\kappa^2}\left[\frac{k}{\kappa}\cos(k\eta)-\sin(k\eta)\right].
\label{eq:explicit_small1}
\end{equation}
%
%
%

The condition $3\langle \psi\rangle^2-1\neq 0$ in the limiting case $f_0\rightarrow 0$ has wider significance when $f_0$ is finite.   As such, we assume quite generally that Equation~\eqref{eq:ch_reg} has a smooth solution, which is equivalently the solution to the equation
\begin{equation}
D\frac{\mathd\psi}{\mathd \eta}=-\frac{v\left(\psi-\langle \psi\rangle\right)}{3\psi^2-1}-\frac{f_0}{3\psi^2-1}\sin(k\eta).
\label{eq:ch_reg1}
\end{equation}
At an extreme point (maximum / minimum), we have $\mathd \psi/\mathd\eta=0$, hence
\begin{equation}
\psi_{\mathrm{max/min}}=\langle \psi\rangle-\frac{f_0}{v}\sin\left(k\eta_{\mathrm{max}/\mathrm{min}}\right),
\end{equation}
hence
\begin{equation}
\langle\psi\rangle - \frac{f_0}{v} \leq \psi_{\mathrm{min}}\leq \psi(\eta)\leq 
\psi_{\mathrm{max}}\leq \langle\psi\rangle + \frac{f_0}{v}.
\label{eq:psimaxmin}
\end{equation}
On the other hand, Equation~\eqref{eq:ch_reg1} has a singularity at $\psi=\pm 1/\sqrt{3}$.  However, by controlling the maximum and the minimum of $\psi$, 
the trajectory of the differential equation~\eqref{eq:ch_reg1} may avoid the singularity.  
This control can be achieved in any one of the following three parameter cases:
\begin{equation}
\left.\begin{aligned}
\text{Case 0: } &1/\sqrt{3} < \langle \psi\rangle-(f_0/v),\\
\text{Case 1: } &\langle \psi\rangle-(f_0/v) > -1/\sqrt{3} \text{ and }\langle \psi\rangle+(f_0/v)<1/\sqrt{3},\\
\text{Case 2: }&\langle \psi\rangle+(f_0/v)<-1/\sqrt{3}.
\end{aligned}
\,\,\right\}
\label{eq:cases}
\end{equation}

Although Equation~\eqref{eq:cases} suggests that $f_0/v$ is a pertinent parameter group of fundamental relevance to the basic equation~\eqref{eq:ch_reg}, this is not so: a quick inspection of Equations~\eqref{eq:ch_reg} reveals that the independent parameters in the problem are $v/D$, $f_0/D$, and $\langle \psi\rangle$.  This is also confirmed by the asymptotic analysis~\eqref{eq:ch_small0}, where the correction at $O(f_0^2)$ (not shown) demonstrates these dependencies clearly.  The following further remark clarifies the number of independent parameters necessary for a complete parameter study:
\begin{remark}
\label{rem:params}
The value of $k$ is fixed as $k=2\pi$ by the  choice to fix the wavelength of the travelling-wave forcing term as the problem lengthscale.  Also,  $D$ can be set to unity without loss of generality, as this amounts to rescaling time in the full spatiotemporal model~\eqref{eq:ch_tw}.   As such, we set $D=1$ and $k=2\pi$ throughout the remainder of this study.
\end{remark}

The condition~\eqref{eq:cases} is a sufficient condition whereby the reduced-order model~\eqref{eq:ch_reg} has a regular solution.  However, it is not a necessary condition.   Therefore, in order to map out comprehensively the parameter regimes wherein Equation~\eqref{eq:ch_reg} has a periodic travelling-wave solution, we solve Equation~\eqref{eq:ch_reg} numerically using a `shooting' technique (see Section~\ref{sec:problem} and~\ref{app:methodology} for details).  We do not solve Equation~\eqref{eq:ch_reg} directly; instead we solve for $X=\psi^3-\psi$:
\begin{equation}
D\frac{\mathd X}{\mathd \eta}+v\left(\psi_j(X)-\langle \psi\rangle\right)+f_0\sin(k\eta)=0,
\label{eq:dX}
\end{equation}
where
\[
\psi_j(X)=\frac{2}{\sqrt{3}}\cos\left[ \tfrac{1}{3}\cos^{-1}\left(\tfrac{3\sqrt{3}}{2}X\right)-\frac{2\pi j}{3}\right],\qquad j=0,1,2.
\]
Equation~\eqref{eq:dX} possesses complex discontinuous solutions in parts of parameter space where the basic equation~\eqref{eq:ch_reg} is singular.  Therefore, the absence of any such complex solutions indicates that the basic equation is regular.

Motivated by these considerations, we have scanned a 
$\left(\langle \psi\rangle,f_0\right)$ parameter (sub-)space for fixed $v$, mapping out regions where Equation~\eqref{eq:dX} possesses a real-valued smooth solution, for the various values of $j$. 
%
The results of the scan are shown in Figure~\ref{fig:parameter_space_cj}.  The symmetry of the figure under $\langle \psi\rangle\rightarrow -\langle \psi\rangle$ is a consequence of Theorem~\ref{thm:symmetry}.
\begin{figure}
	\centering
		\includegraphics[width=0.6\textwidth]{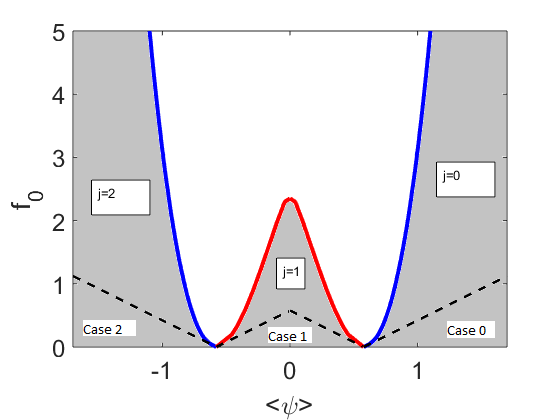}
		\caption{The parameter subspace $\left(\langle \psi\rangle,f_0\right)$ at fixed $v=1$.  Shaded regions correspond to parameter values where the basic equation~\eqref{eq:ch_reg} has precisely one smooth solution; unshaded regions correspond to parameter values where only a singular (complex-valued) solution exists.  Areas  underneath the broken lines correspond to the corresponding Cases 0--2 
mapped out by Equation~\eqref{eq:cases}.}
	\label{fig:parameter_space_cj}
\end{figure}
For any particular value of $\left(\langle \psi\rangle,f_0\right)$ in the figure, there is at most one regular periodic solution of Equation~\eqref{eq:ch_reg}, corresponding to a definite single value of $j$ in Equation~\eqref{eq:dX}.  These regular solutions correspond to the shaded regions of the parameter space in the figure; correspondingly, unshaded regions represent  parameter values where only a singular (complex-valued) solution exists.   The areas underneath the broken lines in the figure represent the special cases mapped out in Equation~\eqref{eq:cases} -- these areas are clearly a subset of the shaded regions.  Thus, the cases in Equation~\eqref{eq:cases} give a subset of all possible smooth solutions, and hence, Equation~\eqref{eq:cases} gives a sufficient but not a necessary condition for the existence of smooth solutions.  Finally, each regular solution is checked and it is confirmed that $|3\psi^2-1|>0$ in each case.  Thus, the possibility of regular solutions with a cosmetic singularity in the governing equation~\eqref{eq:ch_reg} is ruled out.  

\subsection{Periodic solutions -- qualitative analytical results}

It is of interest to look more closely at the parameter regime covered by Equation~\eqref{eq:cases}, as rigorous analysis can be used in this instance to characterise the periodic solutions.  As such, in this section
we use Brouwer's and Banach's Fixed Point Theorems to show rigorously
that a unique periodic solution exists for all values of $f_0/v$ covered by Equation~\eqref{eq:cases}.

\begin{theorem}
\label{thm:exist}
Suppose that any one of the cases in Equation~\eqref{eq:cases} holds.  Then Equation~\eqref{eq:ch_reg1} has at least one $L$-periodic solution.
\end{theorem}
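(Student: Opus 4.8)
The plan is to recast the existence of an $L$-periodic solution as a fixed-point problem for the Poincar\'e (period) map of Equation~\eqref{eq:ch_reg1} and to apply Brouwer's Fixed Point Theorem. By Remark~\ref{rem:params} I set $D=1$ and $k=2\pi$; if $f_0=0$ the constant $\psi\equiv\langle\psi\rangle$ is trivially periodic, so I assume $f_0>0$, and after the half-period shift $\eta\to\eta+L/2$ (which sends $\sin(k\eta)\to-\sin(k\eta)$, i.e.\ $f_0\to-f_0$) together with the time-reversal $\eta\to-\eta$ one may also take $v>0$; the remaining sign combinations reduce to this one. Writing the right-hand side of Equation~\eqref{eq:ch_reg1} as $G(\eta,\psi)=-[v(\psi-\langle\psi\rangle)+f_0\sin(k\eta)]/(3\psi^2-1)$, the function $G$ is smooth and $L$-periodic in $\eta$ on any closed $\psi$-interval that avoids the singular points $\psi=\pm 1/\sqrt3$. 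On such an interval $G$ is Lipschitz uniformly in $\eta$, so solutions exist, are unique, and depend continuously on their initial data; a fixed point of the time-$L$ map $P\colon\psi(0)\mapsto\psi(L)$ therefore corresponds to exactly the desired smooth $L$-periodic solution.

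The natural candidate trap is the interval $I=[\langle\psi\rangle-f_0/v,\ \langle\psi\rangle+f_0/v]=:[a,b]$ suggested by the \emph{a priori} bound~\eqref{eq:psimaxmin}. Each of the three cases in Equation~\eqref{eq:cases} places $I$ strictly inside a single component of the regular set: Case~0 in $(1/\sqrt3,\infty)$, Case~2 in $(-\infty,-1/\sqrt3)$, and Case~1 in $(-1/\sqrt3,1/\sqrt3)$. Hence $3\psi^2-1$ keeps a constant sign on $I$ and is bounded away from zero, so $P$ is well defined on $I$ provided trajectories do not leave $I$. The key computation is the value of the vector field on $\partial I$: at the endpoints one finds $G(\eta,b)=-f_0(1+\sin k\eta)/(3b^2-1)$ and $G(\eta,a)=f_0(1-\sin k\eta)/(3a^2-1)$. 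In Cases~0 and~2 the denominators are positive, so $G(\eta,b)\le 0$ and $G(\eta,a)\ge 0$ for every $\eta$: the field points (weakly) inward along $\partial I$, the closed interval $I$ is forward-invariant, and $P$ is a continuous self-map of $I$. Brouwer's theorem (here simply the intermediate value theorem) then yields a fixed point $\psi^\ast\in I$, and the corresponding solution, being trapped in $I$ and hence in the regular region, is a smooth $L$-periodic solution.

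Case~1 is the genuinely different one and is where I expect the main difficulty. There $3\psi^2-1<0$ on $I$, so the signs above reverse: $G(\eta,b)\ge 0$ and $G(\eta,a)\le 0$, and the field points \emph{outward} along $\partial I$, so $I$ is no longer forward-invariant. The resolution is to run time backwards: the same inequalities show that $I$ is backward-invariant, so the inverse period map $P^{-1}$ is a well-defined continuous self-map of $I$ (equivalently, one applies the forward argument after the time-reversal $\eta\to-\eta$, which, as noted, restores the sign pattern of Cases~0 and~2). Brouwer applied to $P^{-1}$ gives $\psi^\ast$ with $P^{-1}(\psi^\ast)=\psi^\ast$, hence $P(\psi^\ast)=\psi^\ast$, and again a smooth $L$-periodic solution confined to $I$. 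Finally, integrating Equation~\eqref{eq:ch_reg} over one period and using periodicity together with $\int_0^L\sin(k\eta)\,\mathd\eta=0$ shows $\langle\psi^\ast\rangle=\langle\psi\rangle$ automatically when $v\neq 0$, so the constructed solution indeed realises the prescribed mean.

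The main obstacle, then, is not the fixed-point theorem itself but the verification that the period map is well defined on $I$ in Case~1: because the barriers point the wrong way, one must confirm via backward invariance (time reversal) that trajectories remain in $I$ --- and hence away from the singularities $\pm 1/\sqrt3$ --- for the entire period. A subsidiary technical point is that the barrier inequalities are only weak, the field being tangent to $\partial I$ at the isolated instants where $\sin(k\eta)=\mp 1$ and strictly inward otherwise; this is enough for invariance of the closed interval, but it should be stated carefully rather than asserting strict inflow.
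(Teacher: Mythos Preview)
Your proof is correct and shares the paper's high-level strategy --- Brouwer's theorem applied to the period (Poincar\'e) map on the interval $I=[\langle\psi\rangle-f_0/v,\langle\psi\rangle+f_0/v]$ --- but the technical execution differs in a way worth noting. The paper regularizes the ODE, replacing $1/(3\psi^2-1)$ by a mollified $\mathcal{R}(3\psi^2-1,\delta)$, so that the flow is defined even through the singular lines $\psi=\pm 1/\sqrt3$; it then tracks trajectories from the endpoints $a,b$ (which in Case~1 \emph{leave} $I$) to show that $g(x)=f(x)-x$ changes sign, and only afterward argues that the resulting periodic orbit satisfies the bound~\eqref{eq:psimaxmin} and so never sees the regularization.

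You avoid the regularization entirely by working directly with invariance of $I$: forward invariance in Cases~0 and~2 (so Brouwer applies to $P$), and backward invariance in Case~1 (so Brouwer applies to $P^{-1}$, hence to $P$). This is cleaner --- no mollifier, no a~posteriori check that the orbit stays away from $\pm 1/\sqrt3$ --- and it also makes transparent why the paper's remark that ``the other cases are very similar'' is justified: in Cases~0 and~2 the signs of $g(a)$ and $g(b)$ simply swap. One small wording point: your parenthetical that time-reversal ``restores the sign pattern of Cases~0 and~2'' is slightly misleading, since the case label is fixed by the parameters; what time-reversal actually does is flip the sign of $G$ on $\partial I$, converting outward-pointing to inward-pointing, which is precisely what you need. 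Your closing observations on the weak (tangential) barrier inequalities and on the automatic satisfaction of the mean constraint when $v\neq 0$ are correct and well placed.
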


\begin{proof}

The idea of the proof is to construct a scalar-valued function $f$ of a single real variable that maps an interval $I$ of allowed initial values $\psi_0$ at $\eta=0$ to corresponding final values $\psi(L)$
at $\eta=L$.  The function $f$ will be constructed and it will be shown that $f$ has at least one fixed point.  
For definiteness, consideration is given to Case 1, where the
interval of allowed initial conditions is 
\[
I=[a,b]=\left[\langle\psi\rangle-(f_0/v),\langle \psi\rangle+(f_0/v)\right],
\]
such that $\psi_0\in I$.  The other cases (Cases 0 and 2) are  very similar.   
The construction of the function $f$ is shown intuitively in Figure~\ref{fig:f_fn}.
\begin{figure}
\centering
  \includegraphics[width=0.6\textwidth]{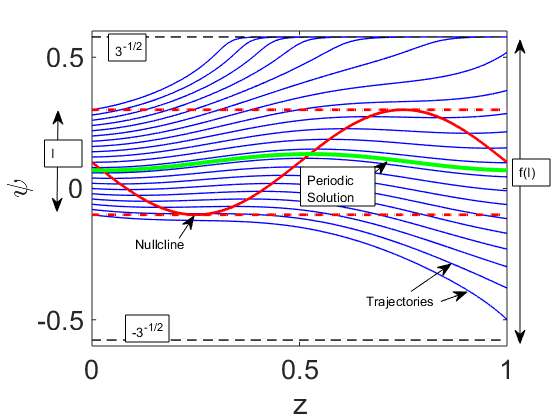}
  \caption{The construction of the $f$-function mapping the interval $I$ to $[-1/\sqrt(3),1/\sqrt{3}]$
   (Case 1).
   Shown also is the nullcline $\psi=\langle \psi\rangle-(f_0/v)\sin(kz)$ across which $\mathd\psi/\mathd\eta$
   changes sign.
   }
  \label{fig:f_fn}
\end{figure}
In what follows, the function $f$ is constructed more formally.

The starting-point of the construction of the function $f$ 
is a regularized version of Equation~\eqref{eq:ch_reg1}:
\begin{equation}
D\frac{\mathd\psi}{\mathd \eta}=-\myreg(3\psi^2-1,\delta)\left[v\left(\psi-\langle \psi\rangle\right)+f_0\sin(k\eta)\right],
\label{eq:ch_regreg}
\end{equation}
where $\myreg(s,\delta)=s/(s^2+\delta^2)$, where $\delta$ is a small but positive mollifier.  
Thus, $\myreg(s,\delta)\rightarrow 1/s$ as $\delta \rightarrow 0$ and for $s\neq 0$, 
such that Equation~\eqref{eq:ch_regreg} reduces to Equation~\eqref{eq:ch_reg1} provided one 
avoids the singular points $\psi=\pm 1/\sqrt{3}$.
Equation~\eqref{eq:ch_regreg} can further be viewed as a two-dimensional system of autonomous differential equations in an enlarged phase space:
\begin{subequations}
\begin{equation}
\frac{\mathd}{\mathd \eta}\left(\begin{array}{c}z \\ \psi\end{array}\right)=
%
\left(\begin{array}{c} 1 \\ 
-\myreg(3\psi^2-1,\delta)\left[v\left(\psi-\langle \psi\rangle\right)+f_0\sin(k\eta)\right]
\end{array}\right),
\end{equation}
with initial conditions
\begin{equation}
z=z_0,\qquad \psi=\psi_0,\qquad \text{ at }\eta=0.
\end{equation}%
\label{eq:ch_system}%
\end{subequations}%
Equations~\eqref{eq:ch_system} reduce back to the single non-autonomous differential equation~\eqref{eq:ch_regreg} when $z_0=0$.

Solutions of Equation~\eqref{eq:ch_system} are embedded in the flow $\myflow$, namely the map
\begin{subequations}
\begin{eqnarray}
\myflow:\mathbb{R}^2\times \mathbb{R} &\rightarrow & \mathbb{R}^2,\nonumber\\
        \left( (z_0,\psi_0), \eta \right)&\mapsto & \myflow(z_0,\psi_0,\eta),
\end{eqnarray}
where
\begin{equation}
\myflow(z_0,\psi_0,\eta)=\left(\begin{array}{c}z=z_0+\eta\\ \psi(\eta)\end{array}\right)
\end{equation}%
\label{eq:flowdef}%
\end{subequations}%
and where $\psi(\eta)$ satisfies Equation~\eqref{eq:ch_system} with initial condition $\psi(0)=\psi_0$ and $z_0=0$.  
Using the flow $\myflow$, the function $f$ can now be prescribed explicitly; it is
\begin{equation}
f:I\rightarrow \mathbb{R},\qquad \psi_0\mapsto f(\psi_0),
\end{equation}
where
\begin{equation}
f(\psi_0)=\left(\begin{array}{cc}0& 0\\ 0 & 1\end{array}\right)\myflow(0,\psi_0,L).
\end{equation}
Referring to Figure~\ref{fig:f_fn}, and to
 the structure of the system of differential equations~\eqref{eq:ch_system}, it is clear that
 Equation~\eqref{eq:ch_system} has no fixed points, periodic orbits, limit cycles, etc.  
 As such, any trajectory starting at $(\psi_0\in I,z_0=0)$ will pass through $z=L$.  Furthermore, 
 since the right-hand side of Equation~\eqref{eq:ch_system} is a smooth function, the flow $\myflow$ is 
 also a smooth function, and hence, $f(\psi_0)$ is a continuous function on the closed domain $I$.
 
The function $g(x)=f(x)-x$ is now introduced.  
Notice that $a$ lies to one side of the nullcline
$\psi=\langle \psi\rangle-(f_0/v)\sin(kz)$ 
across which $\mathd\psi/\mathd\eta$ changes sign.  Thus, $\psi(\eta)$ is a decreasing function along the trajectory starting at $\psi(0)=a$.  
Thus, $g(a)<0$.  Similarly, $g(b)>0$.  Hence, $g(x)$ changes sign on the interval $(a,b)$ and 
so $g(x)$ has at least one zero, $g(x_*)=0$.  Therefore, 
$f(x)$ has at least one fixed point $f(x_*)=x_*$.  Hence, the following special solution of 
Equation~\eqref{eq:ch_regreg}
\begin{equation}
D\frac{\mathd\psi}{\mathd \eta}
=-\myreg(3\psi^2-1,\delta)\left[v\left(\psi-\langle \psi\rangle\right)+f_0\sin(k\eta)\right],
\qquad \eta\in (0,L),\qquad \psi(0)=x_*,
\label{eq:ch_exist}
\end{equation}
is the required periodic solution (albeit of the regularized equation), since $\psi(L)=x_*$.

Finally, the solution of Equation~\eqref{eq:ch_exist} is bounded above and below in the range given by Equation~\eqref{eq:psimaxmin}.
In this range, the solution of Equation~\eqref{eq:ch_exist} is independent of the (small) value of $\delta$,
since $\psi$ never approaches the singularities at $\pm 1/\sqrt{3}$ -- these singular points being the only place where the 
regularization has any effect.  As such, a solution of Equation~\eqref{eq:ch_exist} with $\delta\rightarrow 0$ 
gives the required periodic solution to the unregularized problem.  \myqed

\end{proof}

\begin{remark}
The proof of the existence of a fixed point of the function $f(x)$ can be viewed as a particular application
of Brouwer's Fixed Point Theorem.
\end{remark}

It can be further shown that the fixed point of the function $f(x)$ is unique in Case 1. 
\begin{theorem}
Suppose that Case 1 of Equation~\eqref{eq:cases} holds.  Then Equation~\eqref{eq:ch_reg1} has exactly 
one $L$-periodic solution.
\end{theorem}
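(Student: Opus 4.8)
The plan is to prove uniqueness not by verifying a contraction estimate for the flow map $f$ of the previous theorem, but by a direct comparison of any two candidate solutions. Suppose $\psi_1$ and $\psi_2$ are both $L$-periodic solutions of Equation~\eqref{eq:ch_reg1} for the same parameter values (in particular the same value of $\langle\psi\rangle$). In Case~1 the a priori bound~\eqref{eq:psimaxmin} confines every periodic solution to the compact interval $[\langle\psi\rangle-(f_0/v),\langle\psi\rangle+(f_0/v)]\subset(-1/\sqrt{3},1/\sqrt{3})$, so the regularization plays no role and I may argue directly with the unregularized equation; crucially, on this interval the cubic $G(\psi)=\psi^3-\psi$ satisfies $G'(\psi)=3\psi^2-1<0$, i.e. $G$ is \emph{strictly decreasing}.

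First I would write Equation~\eqref{eq:ch_reg} in its conservation form, $D(\psi_i^3-\psi_i)'=-v(\psi_i-\langle\psi\rangle)-f_0\sin(k\eta)$ for $i=1,2$, and subtract the two copies. Setting $w=\psi_1-\psi_2$ and $W=G(\psi_1)-G(\psi_2)$, the forcing and mean terms cancel identically and one is left with the strikingly simple relation
\[
D\frac{\mathd W}{\mathd\eta}=-v\,w.
\]
Next I would introduce the quadratic functional $W^2$ and compute $\tfrac{1}{2}(W^2)'=WW'=-\tfrac{v}{D}\,Ww$. Because $G$ is strictly decreasing on $(-1/\sqrt{3},1/\sqrt{3})$, the quantities $W$ and $w$ carry strictly opposite signs whenever $w\neq0$, so $Ww<0$ there and $Ww=0$ exactly when $w=0$. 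Hence $(W^2)'=-(2v/D)\,Ww\ge0$ pointwise, with strict inequality at every $\eta$ where $w\neq0$; that is, $W^2$ is nondecreasing along the orbit and strictly increasing on any subinterval where the two solutions differ.

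Uniqueness then follows from periodicity: since $\psi_1$ and $\psi_2$ are $L$-periodic, so is $W$, giving $W^2(0)=W^2(L)$, which is incompatible with $W^2$ increasing strictly somewhere on $[0,L]$; therefore $w\equiv0$ and $\psi_1=\psi_2$. I expect the only genuine obstacle to be the uniform control of the sign of $G'$ along the \emph{entire} orbit: the monotonicity of $W^2$ would be destroyed the instant either solution reached $\pm1/\sqrt{3}$, and it is precisely Case~1 together with the confinement estimate~\eqref{eq:psimaxmin} that forbids this. I note in passing that the same argument applies verbatim in Cases~0 and~2, where $G'>0$ renders $W^2$ strictly \emph{decreasing} instead, once again collapsing $W^2(0)=W^2(L)$; and that this Lyapunov-type route seems cleaner than a Banach contraction estimate, which would instead require signing $\int_0^L\partial_\psi[\cdots]\,\mathd\eta$ through the variational equation governing $f'$.
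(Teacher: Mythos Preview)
Your argument is correct, and it is genuinely different from the paper's proof. The paper works through the Poincar\'e map $f$ of Theorem~\ref{thm:exist}: it computes $f'(\psi_0)=\exp\bigl(\int_0^L\partial_\psi F|_{\psi(\eta)}\,\mathd\eta\bigr)$ via the variational equation, then shows that in Case~1 the integrand $\partial_\psi F$ is strictly positive (by examining the discriminant of the numerator in Equation~\eqref{eq:dFdc}), whence $f'>1$ on a subinterval $J\ni x_*$, so that $f^{-1}:I\to J$ is a contraction and Banach's theorem gives uniqueness. Your route bypasses the flow map entirely: subtracting the two copies of Equation~\eqref{eq:ch_reg} kills the forcing and mean terms exactly, leaving $DW'=-vw$, and the strict monotonicity of $G(\psi)=\psi^3-\psi$ on $(-1/\sqrt{3},1/\sqrt{3})$ makes $W^2$ a monotone Lyapunov function along the period, which periodicity forces to be constant.

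What each approach buys: the paper's method is tied to the scalar shooting-map picture already set up for existence and makes explicit the exponential rate $f'$, but it requires signing the more complicated quantity $\partial_\psi F$, which is why the analogous argument in Cases~0 and~2 (the paper's Theorem~3) incurs the extra restriction $\langle\psi\rangle>(f_0/v)+\sqrt{4(f_0/v)^2+1/3}$. Your argument needs only the sign of $G'=3\psi^2-1$, which is fixed once the orbit is confined to one of the three intervals separated by $\pm1/\sqrt{3}$; as you observe, this means your proof actually delivers uniqueness in Cases~0 and~2 \emph{without} the additional constraint---a strictly stronger conclusion than the paper obtains. The only point worth making explicit is that any $L$-periodic classical solution of Equation~\eqref{eq:ch_reg1} automatically has mean $\langle\psi\rangle$ (integrate Equation~\eqref{eq:ch_reg} over a period) and, being continuous and unable to touch $\pm1/\sqrt{3}$, must lie entirely in the interval $(-1/\sqrt{3},1/\sqrt{3})$ in Case~1; you invoke this implicitly when you cite~\eqref{eq:psimaxmin}, and it is indeed all that is needed.
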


The idea of the proof is to look at the  magnitude of $f'(x)$ on an appropriate sub-interval 
$J\subset I$ with $x_*\in J$.  Here, $x_*$ is the fixed point already identified in Theorem~\ref{thm:exist}.
As such, the starting-point is the inequality
\[
\left|f(\psi_1)-f(\psi_0)\right|\leq \max_{x\in J}|f'(x)||\psi_1-\psi_0|,\qquad \psi_0,\psi_1\in J,
\]
If $|f'(x)|<1$ for all $x\in J$, then $f(x)$ is a contraction mapping,
such that Banach's Fixed Point Theorem can be used to demonstrate the uniqueness of the fixed point.  Otherwise, if $|f'(x)|>1$
for all $x\in I$ (or for all $x$ in a sub-interval $J\subset I$ containing a known fixed point $x_*$), then 
the inverse map can be constructed such that $f^{-1}$ is a contraction mapping, to which Banach's Fixed Point Theorem can again be applied.
More formally, the proof proceeds as follows.

\begin{proof}
Consider again the basic first-order non-autonomous differential equation~\eqref{eq:ch_regreg},
recalled here as
\begin{equation}
D\frac{\mathd\psi}{\mathd \eta}=F_\delta(\psi,z),\qquad F_\delta(\psi,z)=-\myreg(3\psi^2-1,\delta)\left[v\left(\psi-\langle \psi\rangle\right)+f_0\sin(k z)\right],
\label{eq:ch_regregF}
\end{equation}
where $z=z_0+\eta$ and $z_0=0$.  
The corresponding flow $\myflow$ is again given by Equation~\eqref{eq:flowdef}.  
We further define
\begin{equation}
F(\psi,z)=\lim_{\delta\rightarrow 0}F_\delta(\psi,z)
=-\frac{1}{3\psi^2-1}\left[v\left(\psi-\langle \psi\rangle\right)+f_0\sin(k z)\right],
\end{equation}
provided the limit exists.
Using
standard results concerning the flow $\myflow$, it can be shown that the derivative of $f(\psi_0)$ is given by
\begin{equation}
f'(\psi_0)=\exp\left(\int_0^L \frac{\partial F_\delta}{\partial \psi}\big|_{\psi(\eta)}\mathd \eta\right),
\label{eq:formal}
\end{equation}
where the trajectory $\psi(\eta)$ starts from $\eta=0$ and $\psi(0)=x_*$ and 
returns to $x_*$ at $\eta=L$; in other words, $x_*$ is a fixed point as identified previously in Theorem~\ref{thm:exist}.  Moreover, the periodic trajectory of interest 
remains far from the singular points at $\pm 1/\sqrt{3}$, such that the limit $\delta\rightarrow 0$ in Equation~\eqref{eq:formal} can be taken at $\psi_0=x_*$,
and Equation~\eqref{eq:formal} becomes
\begin{equation}
f'(\psi_0)=
\lim_{\delta\rightarrow 0}\exp\left(\int_0^L \frac{\partial F_\delta}{\partial \psi}\big|_{\psi(\eta)}\mathd \eta\right)
=\exp\left(\int_0^L \frac{\partial F}{\partial \psi}\big|_{\psi(\eta)}\mathd \eta\right),
\label{eq:formalx}
\end{equation}
where Equation~\eqref{eq:formalx} is valid in an open sub-interval $J\subset I$ containing $x_*$, and where
\begin{equation}
\frac{\partial F}{\partial \psi}=
\frac{3v\psi^2-6\psi \left[v\langle \psi\rangle-f_0\sin (k z)\right]+v}{(3\psi^2-1)^2}.
\label{eq:dFdc}
\end{equation}

In what follows, it is helpful to specify the subinterval $J$ explicitly.  Since the unregularized ODE has singularities at $\psi=\pm 1/\sqrt{3}$, the corresponding function $f:I\rightarrow \mathbb{R}$ is not continuous on the full interval $I$.
By restricting the function $f$ to the sub-interval $J=f^{-1}(I)$ for the present purposes,
continuity (and differentiability) on the corresponding open subinterval is regained; moreover,
\[
f:J\rightarrow I,\qquad \psi_0\mapsto f(\psi_0)
\]
is a monotone-increasing function on the restricted domain $J$, and is therefore invertible on the same.  This idea is shown 
schematically in Figure~\ref{fig:f_fn2}.
\begin{figure}
\centering
  \includegraphics[width=0.3\textwidth]{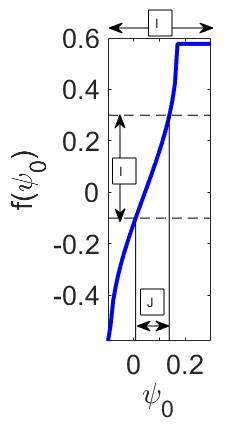}
  \caption{The restriction of the domain of the function $f(\eta)$ to $J=f^{-1}(I)$, the inverse-image of the interval $I$ (Case 1).
   }
  \label{fig:f_fn2}
\end{figure}
By invertibility, $x_*\in f^{-1}(I)$ also, as required for the proof.

The sign of $\partial F/\partial \psi$ in Equation~\eqref{eq:dFdc} now determines the behaviour of the derivative of the function $f$.  
The behaviour of the numerator is key: the numerator
is a quadratic function of $\psi$, and is positive-definite provided the sign of the appropriate discriminant is positive, i.e.
\[
1-\tfrac{1}{\sqrt{3}}\frac{1}{\left[\langle \psi\rangle-(f_0/v)\sin (k z)\right]^2}>0.
\]
The requirement is satisfied if
\[
\left| \langle \psi\rangle - (f_0/v)\sin (k z)\right|<\frac{1}{\sqrt{3}}.
\]
This is precisely the parameter range described by Case 1.
In this case, $\partial F/\partial \psi>0$, and $f'(x)>1$ by Equation~\eqref{eq:formal}, for all $x\in J$.  
Correspondingly, the inverse $f^{-1}:I\rightarrow J$ is a contraction mapping, whose derivative has the explicit form
\begin{equation}
\left(f^{-1}\right)'(\psi_0)=\exp\left(-\int_0^L \frac{\partial F}{\partial \psi}\big|_{\psi(\eta)}\mathd \eta\right),
\label{eq:formal2}
\end{equation}
where now $\psi(\eta)$ is a trajectory such that $\psi(L)=\psi_0$.
Thus, $|f^{-1}(x)|<1$ for all $x\in I$, and so $f^{-1}:I\mapsto J$ is a contraction mapping, and so by Banach's Fixed Point Theorem, 
the fixed point $x_*=f(x_*) \iff f^{-1}(x_*)=x_*$ is unique.   \myqed

\end{proof}

Uniqueness can also be shown in Cases (0,2), but with certain further restrictions on the parameter values.  For definiteness, 
the following theorem focuses on Case 0; a similar result holds for Case 2.

\begin{theorem}
Suppose that Case 0 of Equation~\eqref{eq:cases} holds with the restriction 
\[
\langle \psi \rangle > (f_0/v)+\sqrt{ 4(f_0/v)^2+\tfrac{1}{3}}.
\]
on the parameter values.
Then Equation~\eqref{eq:ch_reg1} has exactly 
one $L$-periodic solution.
\end{theorem}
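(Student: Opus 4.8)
The plan is to follow the structure of the preceding (Case 1) uniqueness proof, but with the sign reversed. There, one shows $\partial F/\partial\psi>0$ along every admissible trajectory, so that $f'>1$ and $f^{-1}$ is a contraction; here I would instead establish $\partial F/\partial\psi<0$, so that $0<f'<1$ and $f$ itself is a contraction. The restriction $\langle\psi\rangle>(f_0/v)+\sqrt{4(f_0/v)^2+\tfrac{1}{3}}$ will turn out to be exactly the condition that forces this sign.

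First I would invoke the flow-derivative formula \eqref{eq:formalx}--\eqref{eq:dFdc}. By \eqref{eq:psimaxmin} the periodic trajectory is confined to $I=[\langle\psi\rangle-(f_0/v),\langle\psi\rangle+(f_0/v)]$, and in Case 0 this entire interval lies above $1/\sqrt{3}$, so the singular points $\pm1/\sqrt{3}$ are never approached and the $\delta\to0$ limit is harmless, exactly as before. Writing $b=\langle\psi\rangle-(f_0/v)\sin(kz)$, the numerator in \eqref{eq:dFdc} factors as $N(\psi,z)=v\bigl(3\psi^2-6\psi b+1\bigr)$, and since the denominator $(3\psi^2-1)^2$ is strictly positive on the trajectory, the sign of $\partial F/\partial\psi$ coincides with the sign of the quadratic $q(\psi)=3\psi^2-6\psi b+1$.

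The heart of the argument---and the step I expect to demand the most care---is an elementary but sign-sensitive extremisation. I want $q(\psi)<0$ for every $\psi\in I$ and every admissible $b$ (as $\sin(kz)$ ranges over $[-1,1]$, $b$ ranges over $I$). For fixed $\psi>0$ the map $b\mapsto q(\psi)$ is decreasing, so the worst case is the smallest value $b=\langle\psi\rangle-(f_0/v)$; the resulting upward parabola in $\psi$ has its vertex precisely at the left endpoint of $I$, hence is increasing on $I$ and attains its maximum at the right endpoint $\psi=\langle\psi\rangle+(f_0/v)$. Evaluating $q$ there and simplifying reduces the requirement $q<0$ to $(\langle\psi\rangle-(f_0/v))^2>4(f_0/v)^2+\tfrac{1}{3}$, which, since $\langle\psi\rangle-(f_0/v)>0$ in Case 0, is equivalent to the stated hypothesis. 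Thus the restriction is sharp for this endpoint estimate.

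With $\partial F/\partial\psi<0$ holding uniformly on $I$ (hence along every trajectory emanating from the restricted domain $J=f^{-1}(I)$ introduced in the previous proof), the exponent in \eqref{eq:formalx} is strictly negative, so $0<f'(\psi_0)<1$ throughout $J$. Therefore $f$ is a contraction there, and if $x_1,x_2$ were two fixed points the estimate $|x_1-x_2|=|f(x_1)-f(x_2)|\le(\max_{J}|f'|)\,|x_1-x_2|$ with $\max_J|f'|<1$ forces $x_1=x_2$; together with the existence guaranteed by Theorem~\ref{thm:exist} this gives exactly one $L$-periodic solution. The only subtlety beyond the quadratic estimate is the bookkeeping already handled in Case 1: confirming that the relevant fixed points lie in $J$, where $f$ is continuous, differentiable and monotone, so that the contraction bound legitimately applies.
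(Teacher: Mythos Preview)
Your proposal is correct and follows essentially the same route as the paper: both show $\partial F/\partial\psi<0$ along the trapped trajectory by controlling the sign of the quadratic numerator $3\psi^2-6\psi b+1$ (with $b=\langle\psi\rangle-(f_0/v)\sin(kz)$), deduce $0<f'<1$, and invoke the contraction-mapping argument together with Theorem~\ref{thm:exist}. The only difference is organisational: the paper computes the roots $\psi_\pm$ of the quadratic and shows that $\min_z\psi_+(z)>\langle\psi\rangle+(f_0/v)$ (so that $I$ lies strictly between the roots), whereas you extremise the quadratic directly over $(b,\psi)\in I\times I$; both reductions collapse to the identical inequality $(\langle\psi\rangle-(f_0/v))^2>4(f_0/v)^2+\tfrac{1}{3}$.
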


\begin{proof}
As before, we look at $f'(\psi_0)$ on an appropriate sub-interval $J\subset I$:
\begin{equation}
f'(\psi_0)=\exp\left(\int_0^L \frac{\partial F}{\partial \psi}\big|_{\psi(\eta)}\mathd \eta\right),
\label{eq:formal1}
\end{equation}
where again the sign of $\partial F/\partial \psi$ determines the behaviour of the derivative of $f'(\psi_0)$.  As before,
\[
\frac{\partial F}{\partial \psi}=
\frac{3v\psi^2-6\psi \left[v\langle \psi\rangle-f_0\sin (k z)\right]+v}{(3\psi^2-1)^2}.
\]
In contrast to Case 1, in Case 0, the numerator of $\partial F/\partial\psi$ vanishes at $\psi$-values
\[
\psi_{\pm}=X\left(1\pm \sqrt{1 -\frac{1}{3X^2}}\right),\qquad
X=\langle \psi\rangle-(f_0/v)\sin(k z).
\]
We have $X>1/\sqrt{3}$ in Case 0, hence $\psi_+>1/\sqrt{3}$ and $\psi_-<1/\sqrt{3}$.  As such, only $\psi_+$ is admissible in Case 0.  The goal now is to show that $\psi(z)\neq \psi_+$ along a periodic trajectory in Case 0, which amounts
to showing
\[
\psi_+ \notin \left[\langle \psi\rangle-(f_0/v),\langle \psi \rangle + (f_0/v)\right],
\]
since the periodic trajectory is contained in this range.
As such, we view $\psi_+$ as a function of $z$ and we require 
\[
\min_{z}\psi_+(z)> \langle \psi\rangle + (f_0/v).
\]
We recall the definition of $\psi_+$ as a root of
\begin{equation}
3v\psi_+^2-6\psi_+ \left[v\langle \psi\rangle-f_0\sin (k z)\right]+v=0.
\label{eq:psiroot}
\end{equation}
By differentiating both sides of this equation with respect to $z$ and setting $\mathd \psi_+/\mathd z=0$ 
(corresponding to extreme values of $\psi_+(z)$), one obtains the condition $\psi_+(z)\cos (k z)=0$.  The possibility $\psi_+=0$ is ruled out in view of Equation~\eqref{eq:psiroot}, hence $\cos(kz)=0$ at the extreme points.
By inspection, the minimum is attained at $kz=\pi/2$, hence
\[
\min_{z}\psi_+(z) = 
\left[\langle \psi\rangle -(f_0/v)\right]\bigg\{1+ \sqrt{1 -\tfrac{1}{3}\frac{1}{\left[\langle \psi\rangle - (f_0/v)\right]^2}}\bigg\},
\]
and we therefore require
\[
\left[\langle \psi\rangle -(f_0/v)\right]\bigg\{1+ \sqrt{1 -\tfrac{1}{3}\frac{1}{\left[\langle \psi\rangle - (f_0/v)\right]^2}}\bigg\}
> \langle \psi\rangle+ (f_0/v).
\]
This simplifies to give
\begin{equation}
\langle \psi \rangle > (f_0/v)+\sqrt{ 4(f_0/v)^2+\tfrac{1}{3}},
\label{eq:range_thm}
\end{equation}
and this is precisely the range given in the theorem statement.

As such, provided $\langle \psi\rangle$ is in the range given by the inequality~\eqref{eq:range_thm}, $\left(\partial F/\partial\psi\right)_{\psi(\eta)}<0$, and hence $|f'(\psi_0)|<1$ for all $\psi_0\in J$ and hence, $f:J\rightarrow I$ is a contraction mapping and the known fixed point $x_*$ is unique.

\end{proof}

\begin{remark}
A similar result holds in Case 2: with the restriction 
\begin{equation}
\langle \psi \rangle < -(f_0/v)-\sqrt{ 4(f_0/v)^2+\tfrac{1}{3}},
\label{eq:range_thm1}
\end{equation}
the $L$-periodic solution is unique in that case.
\end{remark}
%

Finally, we note that although a unique base state is guaranteed only in the ranges given by
Equations~\eqref{eq:range_thm}--\eqref{eq:range_thm1}, we have been unable to find any numerical evidence for non-unique solutions in any other parts of the parameter space.
As such, the numerical analysis provided in Figure~\ref{fig:parameter_space_cj} (with at most one periodic solution at any point in parameter space) appears to be complete.
   
\section{The reduced-order model -- linear stability analysis}
\label{sec:stability}

We look at solutions of the temporally-evolving equation~\eqref{eq:ch_tw} that are made up of the equilibrium travelling-wave part $\psi(\eta)$ (the ``base state''), plus a small perturbation:
\begin{equation}
C(x,t)=\psi(\eta)+\delta C(\eta,t),\qquad \eta=x-vt,
\label{eq:trial}
\end{equation}
where $\psi(\eta)$ is the travelling-wave solution already characterized in Section~\ref{sec:reduced} and $\delta C$ is a small perturbation.  
We substitute Equation~\eqref{eq:trial} into Equation~\eqref{eq:ch_tw} and linearize, omitting terms that are $O(\delta C^2)$ and higher, 
and we obtain
\begin{equation}
\frac{\partial}{\partial  t}\delta C-v\frac{\partial}{\partial\eta}\delta C = 
D\frac{\partial^2}{\partial\eta^2}\left(\myspin \delta C\right)- \epsilon D \frac{\partial^4}{\partial\eta^4}\delta C,\qquad
\myspin =3\psi^2-1.
\label{eq:stability_gamma}
\end{equation}
This equation is separable: we can write $\delta C=\mathe^{\lambda t}\widetilde{\delta C}(\eta)$, such that Equation~\eqref{eq:stability_gamma} becomes (after omitting the tilde over $\widetilde{\delta C}(\eta)$:
\begin{equation}
\lambda \delta C-v\frac{\partial}{\partial\eta}\delta C = 
D\frac{\partial^2}{\partial\eta^2}\left(\myspin \delta C\right)
-\epsilon D \frac{\partial^4}{\partial\eta^4}\delta C.
\label{eq:stability_lambda}
\end{equation}
Equation~\eqref{eq:stability_lambda} is an eigenvalue equation in the eigenvalue $\lambda$: if $\myre(\lambda)>0$ for some eigenvalue in the spectrum of Equation~\eqref{eq:stability_lambda}, then the travelling wave $\psi(\eta)$ is unstable.  We continue working in the limit where the reduced-order model is valid; hence, we work with Equation~\eqref{eq:stability_lambda} with $\epsilon=0$.

{\textbf{As the full temporally-evolving Cahn--Hilliard equation with sinusoidal forcing preserves the mean concentration $\langle \psi\rangle$, it follows that $\langle \delta C\rangle=0$ for all time.  
Therefore the boundary conditions on $\delta C(x,t)$ are either (i) periodic, with $\delta C(\eta+L,t)=\delta C(\eta,t)$, or 
(ii) bounded, with $\delta C \rightarrow 0$ as $|\eta|\rightarrow \infty$ (and the same for the $\eta$-derivatives of $\delta C$) or
In this paper, we deal with Case (i) only, for the following reasons: this case is simple, and it can be used to shed light on the numerical simulations below in Section~\ref{sec:full}.  Also, the analysis developed in Case (i) may be combined with the theory developed in Reference~\cite{kenny2018}, such that Case (ii) may be considered an extension of Case (i).
As such, we focus in the rest of this section on periodic perturbations, with mean zero, specifically,
\begin{subequations}
\begin{equation}
\delta C(\eta+L,t)=\delta C(\eta,t),\qquad t>0.
\end{equation}
\begin{equation}
\langle \delta C\rangle(t)=0,\qquad t\geq 0.
\end{equation}%
\label{eq:bc_pert}%
\end{subequations}%
With these clarifying remarks, sufficient conditions for stability can be obtained in Regions 0 and 2:
\begin{theorem}
\label{thm:stable}
Travelling-wave solutions $\psi(\eta)$ in Regions 0 and 2 are stable 
with respect to mean-zero periodic perturbations if
\begin{equation}
 \myspin_{\mathrm{min}}(2\pi/L)^2 \geq \tfrac{1}{2}|\myspin''|_{\mathrm{max}}|.
\label{eq:constraint_ab}
\end{equation}
\end{theorem}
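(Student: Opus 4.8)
The plan is to use an energy (quadratic-form) method. Setting $\epsilon=0$ in Equation~\eqref{eq:stability_lambda} and writing $u:=\delta C$ for brevity, the eigenvalue equation reads $\lambda u = v\,u' + D(\myspin u)''$. I would multiply through by the complex conjugate $\bar u$, integrate over one period $[0,L]$, and take the real part. The advection term contributes $v\int_0^L u'\bar u\,\mathd\eta$; since $\int_0^L (u'\bar u + u\bar u')\,\mathd\eta = \int_0^L (|u|^2)'\,\mathd\eta = 0$ by periodicity, this integral is purely imaginary and its real part vanishes. This structural feature, which removes the non-self-adjoint advection from the energy balance, is what makes the estimate tractable despite the operator not being self-adjoint.

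Next I would treat the diffusion term. Integrating $D\int_0^L (\myspin u)''\bar u\,\mathd\eta$ by parts twice and discarding boundary terms by periodicity gives $-D\int_0^L \myspin'\,u\bar u'\,\mathd\eta - D\int_0^L \myspin\,|u'|^2\,\mathd\eta$. The second piece is already in dissipative form. For the cross term I would use $\myre(u\bar u') = \tfrac12(|u|^2)'$ to write $\myre\int_0^L \myspin'\,u\bar u'\,\mathd\eta = \tfrac12\int_0^L \myspin'(|u|^2)'\,\mathd\eta$, and then integrate by parts once more to obtain $-\tfrac12\int_0^L \myspin''\,|u|^2\,\mathd\eta$. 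Collecting everything, the energy balance becomes $\myre(\lambda)\int_0^L|u|^2\,\mathd\eta = -D\int_0^L \myspin\,|u'|^2\,\mathd\eta + \tfrac{D}{2}\int_0^L \myspin''\,|u|^2\,\mathd\eta$.

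To conclude I would bound each integral. In Regions 0 and 2 one has $\myspin = 3\psi^2-1 > 0$, so $\int_0^L \myspin\,|u'|^2\,\mathd\eta \geq \myspin_{\mathrm{min}}\int_0^L|u'|^2\,\mathd\eta$, while $\int_0^L \myspin''\,|u|^2\,\mathd\eta \leq |\myspin''|_{\mathrm{max}}\int_0^L|u|^2\,\mathd\eta$. The decisive step is then the Wirtinger (Poincar\'e) inequality for mean-zero $L$-periodic functions, $\int_0^L|u'|^2\,\mathd\eta \geq (2\pi/L)^2\int_0^L|u|^2\,\mathd\eta$, whose validity rests precisely on the mean-zero constraint $\langle\delta C\rangle=0$ imposed by the boundary conditions~\eqref{eq:bc_pert}. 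Combining these yields $\myre(\lambda) \leq -D\bigl[\myspin_{\mathrm{min}}(2\pi/L)^2 - \tfrac12|\myspin''|_{\mathrm{max}}\bigr]$, which is $\leq 0$ exactly under hypothesis~\eqref{eq:constraint_ab}, establishing stability.

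I anticipate the main obstacle to be the careful handling of the $\myspin'$ cross term: the key is to recognise that, after using $\myre(u\bar u')=\tfrac12(|u|^2)'$ and one further integration by parts, it converts into a $\myspin''$ contribution of definite form rather than being estimated crudely. A naive Cauchy--Schwarz bound on $\int \myspin'\,u\bar u'\,\mathd\eta$ would produce a worse constant and might fail to close the argument. A secondary point requiring care is verifying that $\myspin_{\mathrm{min}}>0$ in Regions 0 and 2 — this uses the defining property $|\psi|>1/\sqrt{3}$ of those regions and is what makes the leading dissipative term genuinely coercive.
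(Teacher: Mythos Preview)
Your proposal is correct and follows essentially the same approach as the paper: multiply the $\epsilon=0$ eigenvalue equation by $\overline{\delta C}$, integrate over a period, observe that the advection term contributes nothing to the real part, integrate by parts to reach the identity $\myre(\lambda)\|\delta C\|_2^2 = -D\int_0^L \myspin|\delta C'|^2\,\mathd\eta + \tfrac{D}{2}\int_0^L \myspin''|\delta C|^2\,\mathd\eta$, and then close with the pointwise bounds and the Poincar\'e/Wirtinger inequality for mean-zero periodic functions. Your write-up is in fact more explicit than the paper's about why the advection term drops out and how the $\myspin'$ cross term converts to a $\myspin''$ term, but the argument is the same (minor slip: the passage from $\int(\myspin u)''\bar u$ to $-\int\myspin' u\bar u' - \int\myspin|u'|^2$ is one integration by parts followed by expanding the product rule, not two).
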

Here, $\myspin_{\mathrm{min}}>0$ is the minimum of $\myspin$ over the periodic interval $[0,L]$ and similarly,
\[
|\myspin''|_{\mathrm{max}}=\mathrm{max}_{[0,L]}|\myspin''|.
\]
\begin{proof}
We start with the eigenvalue problem~\eqref{eq:stability_lambda} with $\epsilon=0$.  We multiply both sides of Equation~\eqref{eq:stability_lambda} by $\delta C^*$ and integrate from $\eta=0$ to $\eta=L$, applying the periodic boundary conditions, to obtain the following relations:
\begin{eqnarray*}
\myre(\lambda)\|\delta C\|_2^2&=&
-\int_0^L \myspin 
\left|\frac{\mathd}{\mathd\eta}\delta C\right|^2\mathd \eta
+\tfrac{1}{2}\int_0^L \myspin''|\delta C|^2\,\mathd \eta,\\
&\leq & -\myspin_{\mathrm{min}}
\int_0^L 
\left|\frac{\mathd}{\mathd\eta}\delta C\right|^2\mathd \eta
+\tfrac{1}{2}|\myspin''|_{\mathrm{max}}\int_0^L |\delta C|^2\,\mathd \eta.
\end{eqnarray*}
In view of Equation~\eqref{eq:bc_pert}, the perturbation $\delta C$ has mean zero, hence Poincar\'e's inequality applies to the above string of relations, and we thereby obtain
\[
\myre(\lambda)\|\delta C\|_2^2\leq
-\myspin_{\mathrm{min}}(2\pi/L)^2\|\delta C\|_2^2+\tfrac{1}{2}|\myspin''|_{\mathrm{max}}\|\delta C\|_2^2.
\]
Therefore, if
\begin{equation}
 \myspin_{\mathrm{min}}(2\pi/L)^2 \geq \tfrac{1}{2}|\myspin''|_{\mathrm{max}}|.
\label{eq:reg02}
\end{equation}
then it follows that $\myre(\lambda)\leq 0$.  \myqed
\end{proof}
A similar result follows for Region 1, where it can be shown that the mean-zero periodic perturbations are unstable if
\begin{equation}
|\myspin|_{\mathrm{min}}(2\pi/L)^2\geq \tfrac{1}{2}|\myspin''|_{\mathrm{max}}|,
\qquad
|\myspin|_{\mathrm{min}}=\mathrm{min}_{[0,L]}|\myspin|.
\label{eq:reg1}
\end{equation}
%
%
The regions of stability and instability mapped out by Equations~\eqref{eq:reg02}--\eqref{eq:reg1} depend \textit{a priori} on the the basic concentration profile $\psi(\eta)$, which can be generated only numerically.  As such, we have computed $\psi(\eta)$ numerically for a range of values in the parameter subspace $\left(\langle \psi\rangle, f_0\right)$ for the case $v=1$, in Figure~\ref{fig:fullscan}.  From this figure, it can be seen that  in a large part of Region 0, the basic concentration profile is stable , while there is a small shaded part of Region 0 where the basic profile may be unstable or stable -- with furter information required.  A similar picture holds in Region 1.  To complete the picture,  in Section~\ref{sec:full} below we resort to numerical calculations to map out the stable and unstable regions of parameter space more comprehensively. 
}

\begin{figure}
\centering
	\includegraphics[width=0.6\textwidth]{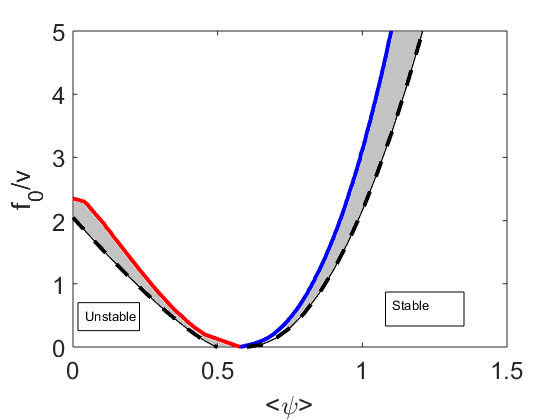}
	\caption{Reduced-order model: Plot of the parameter subspace $\left(\langle \psi\rangle,f_0\right)$ at fixed $v=1$. A large part of Region 0 corresponds to stable travelling waves as indicated.  In the remaining part of Region 0 (shaded), the stability of the travelling waves is not known \textit{a priori}.  A similar picture holds in Region 1.
}
\label{fig:fullscan}
\end{figure}


These results can be understood in the context of the classical spinodal instability of the Cahn--Hilliard equation without forcing (i.e. Equation~\eqref{eq:ch_tw2} with $f_0=0$).  In this equation, a constant state $c_0$ is linearly unstable when $3c_0^2-1<0$.  This corresponds to spinodal instability, which is the mechanism that drives phase separation in (unforced) binary mixtures~\cite{CH_orig}.  In contrast, for highly asymmetric mixtures (i.e. $c_0\neq 0$, with $3c_0^2-1>0$), the spinodal instability is suppressed.  Our results for the travelling-waves in the forced Cahn--Hilliard equation can therefore be viewed as an extension of this classical instability.
\section{The full model equation}
\label{sec:full}

We now look in detail at parameter cases where the reduced-order model breaks down, such that a solution of the full model is required, recalled here as
\begin{equation}
\epsilon D \frac{\mathd^3\psi}{\mathd\eta^3}=
D\frac{\mathd}{\mathd\eta}\left(\psi^3-\psi\right)+v\left(\psi-\langle \psi\rangle\right)+f_0\sin(k\eta).
\label{eq:ch_tw2x}
\end{equation}
The starting-point of the study is numerical simulation of the temporally-evolving counterpart of Equation~\eqref{eq:ch_tw2x}, i.e. temporally-evolving numerical simulations (TENS), based on Equation~\eqref{eq:ch_tw}.  Results of these simulations are reported in what follows.   All simulations are performed at fixed $\epsilon=5\times 10^{-4}$, and for various values of the parameters $\langle \psi\rangle$ and $f_0$.  The parameter $v$ is fixed as $v=1$, although the effect of varying $v$ is investigated briefly below in Section~\ref{sec:vary_v}.

\subsection{Overview of Results}

An overview of the results of the TENS is given in 
Figure~\ref{fig:flow_map}.
In various parts of the parameter space, the TENS lead to a steady travelling-wave profile, whereas in other parts of the parameter space, no such steady state exists.  The steady profiles correspond to solutions of
Equation~\eqref{eq:ch_tw2x}.
\begin{figure}
	\centering
		\includegraphics[width=0.7\textwidth]{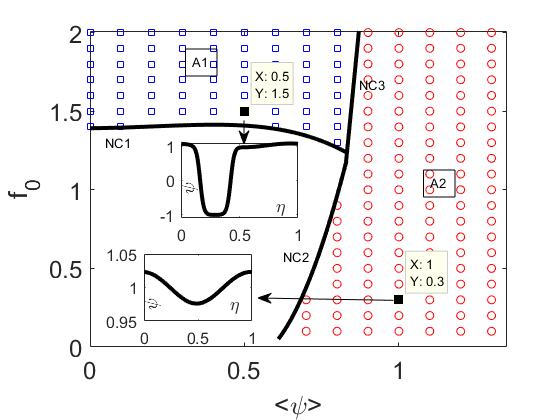}
	\caption{Summary of results of temporally-evolving numerical simulations for fixed $v=1$,
	and for various values of $\langle \psi\rangle$ and $f_0$.  Also, the small parameter $\epsilon$ is set to $5\times 10^{-4}$.  The circles and squares indicate simulations where a steady travelling wave exists.  }
\label{fig:flow_map}
\end{figure}
Two distinct steady profiles manifest themselves in the figure, in distinct parts of the parameter space.  The first profile (labelled `A2' in the figure) consists essentially of an oscillation around the mean value $\langle \psi\rangle$.  The second profile (labelled `A1') consists of regions wherein $\psi\approx 1$ and $\psi \approx -1$, joined together across transition regions, such that $L^{-1}\int \psi(\eta)\mathd \eta=\langle \psi\rangle$.  These distinct profile types have been identified previously in Reference~\cite{weith2009traveling} -- but only in the case $\langle \psi\rangle=0$ (the notation for A1 and A2 is the same as that used in the earlier work).

The steady concentration profiles in Figure~\ref{fig:flow_map} indicate the existence of steady-state solutions of the traveling-wave equation~\eqref{eq:ch_tw2x}.  
These are herein constructed in an equivalent yet independent fashion using a Newton solver (see Section~\ref{sec:problem} and~\ref{app:methodology}); the solutions computed in this manner coincide exactly with the results of the TENS.  This second independent approach is useful because it forms the basis of a linear stability analysis.  As such, we take the steady-state profiles computed via the Newton solver and substitute them into the full linear-stability equation~\eqref{eq:Phi_p}.  Using this analysis, the neutral curves in Figure~\ref{fig:flow_map} are generated -- the neutral curves give the precise limits of the regions in parameter space where the TENS yield steady-state travelling-wave solutions.

Using the numerically-generated travelling-wave solutions (i.e. those generated with the Newton solver), we can characterize the region in 
Figure~\ref{fig:flow_map} where no travelling waves are found via the TENS.  As such, by crossing the curve NC1, the mode A1 continues to exist (as confirmed by the solutions generated with the Newton solver), although the mode A1 switches from stable (high values of $f_0$) to unstable (lower values of $f_0$), across the curve NC1.  Similarly, by crossing NC2, the mode A3 loses stability.  Finally, along the neutral curve NC3, the modes A1 and A2 undergo an exchange of stability (as evidenced by Figure~\ref{fig:mode_A1_changeover}).
\begin{figure}
	\centering
		\subfigure[$\,\,$A1]{\includegraphics[width=0.45\textwidth]{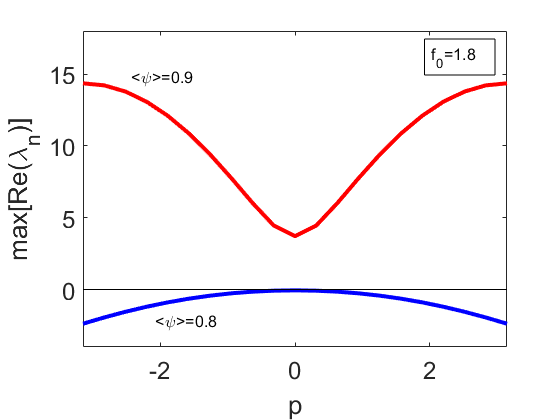}}
		\subfigure[$\,\,$A2]{\includegraphics[width=0.45\textwidth]{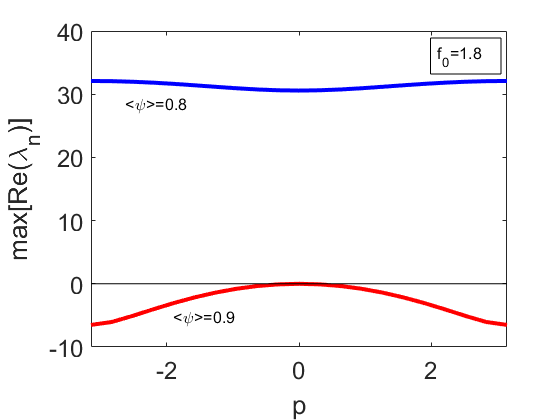}}
		\caption{Exchange of stability between solutions A1 and A2 as the neutral curve N3 is traversed.}
	\label{fig:mode_A1_changeover}
\end{figure}

\subsection{Discussion}

The different solutions in Figure~\ref{fig:flow_map} have been constructed using the Newton solver by providing that method with a specific initial guess for the solution.  Several initial guesses have been provided, leading to the two flow profiles observed in Figure~\ref{fig:flow_map}, as well as other profiles, which we outline below.  

\paragraph*{The A2-solution}  A2-type solutions are observed in Figure~\ref{fig:flow_map}.  These are similar to the solutions of the reduced-order model, which in turn can be identified with an oscillatory profile which oscillates around the mean profile $\langle \psi\rangle$.  The oscillation is either linear or nonlinear.  In the linear case, the oscillation has the same single characteristic wavenumber as the forcing; this is the scenario explained by the linearized solutions of the reduced-order model identified in Section~\ref{sec:reduced}.  Otherwise, the oscillation is nonlinear, and is characterized by the fundamental wavelength $k=2\pi$, and by higher harmonics, which give rise to a steepened concentration profile.  In both scenarios, a solution of of the linearized (full) model corresponding to an oscillation around the mean profile is used as an initial guess for the Newton solver, which leads to the A2-type solutions in the parts of the parameter space outlined in Figure~\ref{fig:flow_map}.

\paragraph*{The A1-solution}  A1-type solutions are also observed in Figure~\ref{fig:flow_map}.  These can be understood intuitively, with Equation~\eqref{eq:ch_tw2x} as the starting point.  As $\epsilon\rightarrow 0$, the spatial variations in Equation~\eqref{eq:ch_tw2x} separate into rapid variations on the scale $\epsilon^{1/2}$, and slow variations on the scale $L$.  If we furthermore look at the limiting case with $v\rightarrow 0$ (or $f_0\rightarrow \infty$), the slow variations are governed by the balance
\[
D\frac{\mathd}{\mathd\eta}\left(\psi^3-\psi\right)\sim f_0\sin(k\eta),
\]
hence
\begin{equation}
\psi^3-\psi\sim -[f_0/(Dk)]\cos (k\eta)+\beta,
\label{eq:balance}
\end{equation}
where $\beta$ is a constant of integration.
This is a cubic equation in $\psi$, with at most three real solutions labelled as $f_j(\eta)$. 
  As such, the A1 solution consists of a patchwork of two such $f_j$ functions, stitched together by two transition regions of width $\epsilon^{1/2}$.  A possibly infinite family of such solutions exists, parameterized by $\beta$.  However, a particular solution is selected such that $\max(\psi)\approx 1$ and $\min(\psi)\approx -1$, as these values are energetically the most favourable in the free-energy picture of the Cahn--Hilliard dynamics (\textit{cf.} Equation~\eqref{eq:ch_basic}).  A sketch of this idea is shown in Figure~\ref{fig:my_crazy_profile}.  
\begin{figure}
	\centering
		\includegraphics[width=0.6\textwidth]{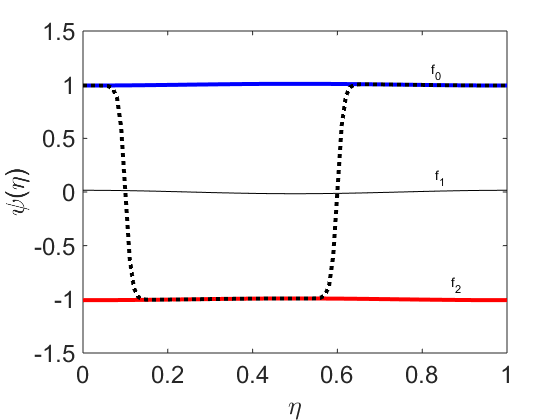}
		\caption{The idea for the construction of the single-spike approximate solution to Equation~\eqref{eq:ch_tw2x}.  The approximate solution is constructed by stitching together two $f_j$-profiles.  Recall, the $f_j$-functions are solution of Equation~\eqref{eq:balance}.  In this figure,  the $f_j$ profiles are joined together across narrow step-like transition regions.  }
	\label{fig:my_crazy_profile}
\end{figure}

This approach is similar to the idea of matched asymptotic expansions.  Although we do not use a rigorous theory of matched asymptotic expansions here, the terminology of that theory is useful.  As such, the $f_j$-profiles can be regarded as `outer solutions'.
Furthermore, in the limit as $\epsilon\rightarrow 0$, the spatial variation in $f_j(\eta)$ is negligible, as all spatial variations take place over the transition regions.  Then, the outer solutions are determined by $\psi^3-\psi=\beta$, with $\psi=\pm 1,0$ for $\beta=0$.  
Correspondingly, there is an `inner problem', where the dominant balance in Equation~\eqref{eq:ch_tw2x} is given by 
\[
\epsilon  \frac{\mathd^3\psi}{\mathd\eta^3}\sim 
\frac{\mathd}{\mathd\eta}\left(\psi^3-\psi\right),
\]
solutions of which are $\tanh$-functions (\textit{cf.} Equation~\eqref{eq:mytanh}).
By combining the inner and outer solutions in a heuristic fashion, an approximate single-spike solution can be constructed as
\begin{equation}
\psi^{\mathrm{approx}}(\eta)=s \tanh\left( \frac{\eta-c_1}{\sqrt{2\epsilon}}\right)\tanh\left( \frac{\eta-c_2}{\sqrt{2\epsilon}}\right),\qquad \epsilon\rightarrow 0,\qquad s=1,
\label{eq:psi_approx}
\end{equation}
where $c_1=L/4$ and $c_2=c_1+\tfrac{1}{2}\left[L-s\langle \psi\rangle\right]$.  This choice of $c_1$ and $c_2$ has the effect of stitching together the outer solutions such that $\langle \psi^{\mathrm{approx}}\rangle$ has the required value, 
$\langle \psi^{\mathrm{approx}}\rangle=\langle \psi\rangle$.  
Finally, a finite value of $v$ can be introduced to this theory: the effect of $v$ is  to introduce a phase shift in the $\psi$-profile so constructed, relative to the $v=0$ case.

These intuitive arguments are the basis for using $\psi^{\mathrm{approx}}(\eta)$ in Equation~\eqref{eq:psi_approx} as an initial guess for the Newton solver.  The results of iterating the Newton solver confirm the correctness of the theory, as the solver converges to a concentration profile almost identical (up to a phase shift) to the initial guess $\psi^{\mathrm{approx}}(\eta)$ (e.g. Figure~\ref{fig:close_initial}).
\begin{figure}
	\centering
		\includegraphics[width=0.6\textwidth]{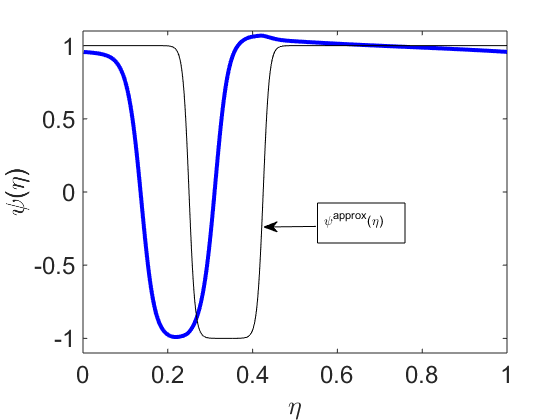}
		\caption{Emergence of single-spike concentration profile from the inital guess $\psi^{\mathrm{approx}}(\eta)$ given by Equation~\eqref{eq:psi_approx}.  Parameters: $\langle \psi\rangle=0.65$, $f_0=0.1$.}
	\label{fig:close_initial}
\end{figure}

We emphasize that this approach can be placed on a much more solid footing when $v=0$.  This special case has been studied extensively in the context of resonant sloshing in shallow water waves~\cite{ockendon1986resonant,mackey2003dynamics}.  Using these works, Equation~\eqref{eq:ch_tw} with $v=0$ can be shown to reduce to a forced Duffing Oscillator, which possesses a Hamiltonian structure. 
Perturbation methods based on this Hamiltonian structure~\cite{mackey2003dynamics} and more generic methods~\cite{ockendon1986resonant} can then be used can be used to construct infinitely-many steady-state solutions which occur as $\epsilon\rightarrow 0$, and which exhibit spatial chaos.  As such,  the comparison with the $v=0$ case, also gives a motivation to seek out further steady-state concentration profiles in what follows.

\paragraph*{The A3 solution} A counterpart to the A1-solution is found by taking $s=-1$ in Equation~\eqref{eq:psi_approx}, and applying this as an initial guess to the Newton solver.  This yields another single-spike solution (not shown).  This is consistent with the fact that the two outer solutions $\psi\approx \pm 1$ can be stitched together in two distinct ways to produce the full solution.  The A3 solution has already been found by different means for the case $\langle \psi\rangle=0$ and was found there to be linearly unstable.  Since the A3 solution is not observed in any of the TENS in Figure~\ref{fig:flow_map}, it can be concluded that the A3 solution is unstable for general values of $\langle \psi \rangle$.

\paragraph*{Other solutions}  Motivated by the above considerations, we have initialized the Newton-solver with a `multiple-spike' initial condition.  The aim here is to demonstrate that the full model possesses such multiple-spike travelling-wave solutions.  As such, we have initialized the Newton solver with the following $N$-spike initial solution guess:
\begin{equation}
\psi^{\mathrm{approx}}(\eta)=(\pm 1)\prod_{j=0}^N \tanh\left( \frac{N\eta-j-c_1}{\sqrt{2\epsilon}}\right)\tanh\left( \frac{N\eta-j-c_2}{\sqrt{2\epsilon}}\right).
\label{eq:cond1}
\end{equation}
where as before, $c_1=L/4$ and $c_2=c_1+\tfrac{1}{2}\left[L-(\pm 1)\langle \psi\rangle\right]$; this provides for  $\langle\psi^{\mathrm{approx}}\rangle= \langle \psi\rangle$.   
The results are shown in Figures~\ref{fig:two_spike}--\ref{fig:mult_spike} for selected parameter values;  these plots establish the existence of multiple-spike travelling-wave solutions.   Two-spike solutions are found at $\langle \psi\rangle=0.65$, whereas $N$-spike solutions are found at $\langle \psi\rangle=0.1$, with $N=2,3,4,5,6$.  After $N=6$, the Newton solver fails to pick out multiple-spike solutions and the solver converges to an A2-type solution.  
%
\begin{figure}
	\centering
		\subfigure[$\,\,\langle \psi\rangle=0.65$]{\includegraphics[width=0.45\textwidth]{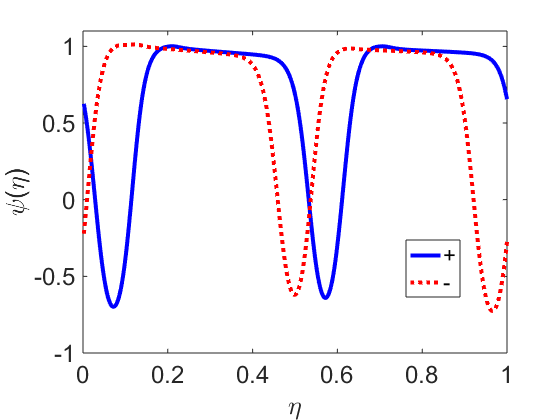}}
		\caption{Two-spike solutions for $(f_0,v)=(0.1,1)$ and $\langle \psi\rangle=0.65$.
		This parameter choice corresponds to a region in parameter space where only the A2 travelling wave is linearly stable.}
\label{fig:two_spike}
\end{figure}
\begin{figure}
		\subfigure[$\,\,N=2$]{\includegraphics[width=0.32\textwidth]{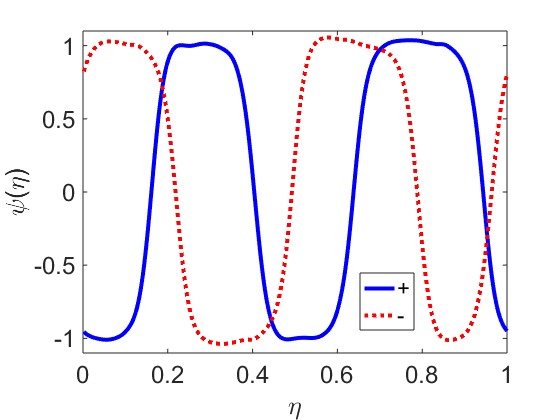}}
		\subfigure[$\,\,N=3$]{\includegraphics[width=0.32\textwidth]{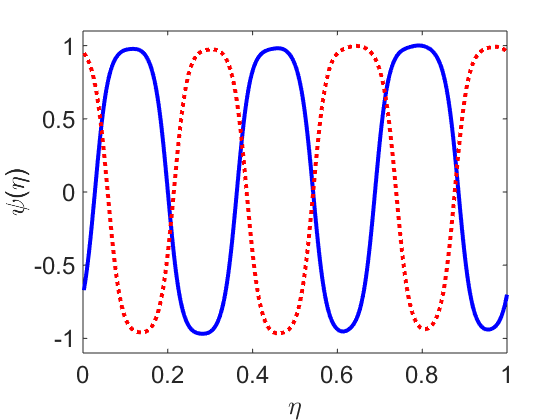}}
		\subfigure[$\,\,N=6$]{\includegraphics[width=0.32\textwidth]{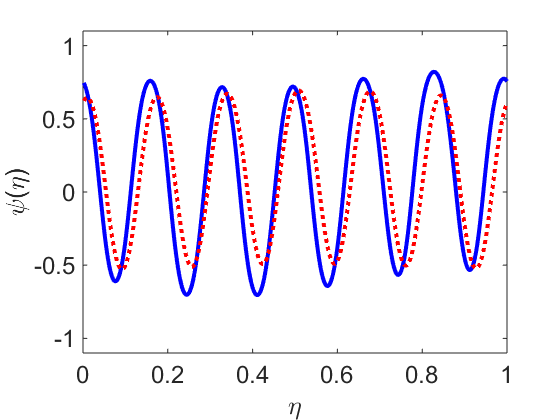}}
		\caption{Multiple-spike solutions for $(f_0,v)=(0.1,1)$ and $\langle \psi\rangle=0.1$, 
		corresponding to a region in parameter space with no stable travelling waves.}
	\label{fig:mult_spike}
\end{figure}
However, by reducing $\epsilon$ further, more spiked solutions are recovered (e.g. $N=15$ with $\epsilon=10^{-4}$, not shown).
It can also be checked via linear stability analysis that these solutions are  unstable (e.g. $\max[\myre(\lambda)]\approx 1669$ with $(f_0,v)=(0.1,1)$, $\langle \psi\rangle=0.1$, and $\epsilon=10^{-4}$ for a 15-spike solution).




\subsection{The effect of variations in $v$}
\label{sec:vary_v}

In Remark~\ref{rem:params} we identified $\langle \psi\rangle$, $f_0$, and $v$ as the key parameters controlling the steady-state concentration profiles (the positive parameter $\epsilon$ matters as well, although this is assumed to be small, such that its exact value is not directly relevant).  So far we have highlighted the effect of varying $\langle \psi\rangle$ and $f_0$ on the steady-state concentration profile -- see e.g. Figure~\ref{fig:flow_map}.  We therefore complete the parametric study by studying the effect of variations of $v$ on the concentration profiles.  As such, in Figure~\ref{fig:flow_map1} we summarize the results of further TENS for various values of $v$.  Figure~\ref{fig:flow_map1} may be compared with Figure~\ref{fig:flow_map}, in which $v=1$.
\begin{figure}
	\centering
		\subfigure[$\,\,v=0.5$]{\includegraphics[width=0.45\textwidth]{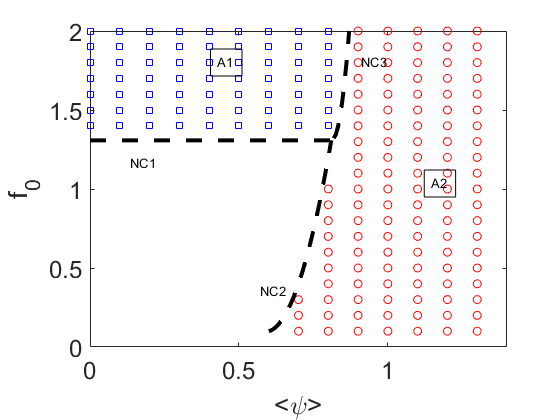}}
		\subfigure[$\,\,v=2$]{\includegraphics[width=0.45\textwidth]{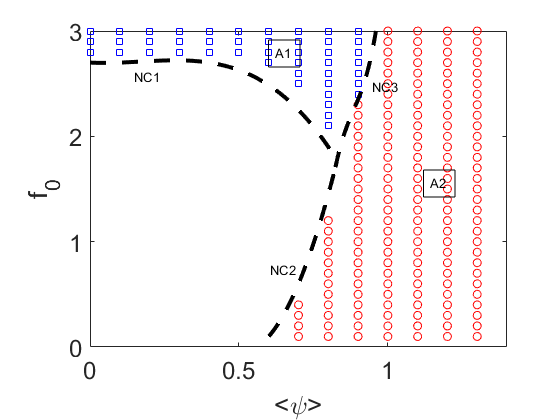}}
		\caption{Summary of results of temporally-evolving simulations for various values of the parameters $v$, $\langle \psi\rangle$,  $f_0$.  The small parameter $\epsilon$ is set to $5\times 10^{-4}$.  As in Figure~\ref{fig:flow_map}, the circles and squares indicate simulations where a steady travelling wave exists.}
	\label{fig:flow_map1}
\end{figure}
The dominant effect to be seen in Figure~\ref{fig:flow_map1} is that increasing $v$ is destabilizing: the neutral curve NC1 shifts to higher $f_0$-values with increasing $v$.  The upward shift in NC1 therefore corresponds to an increase in the blank region in the flow-pattern map where no stable steady-state travelling-waves exist.  We remark that the A1-solution persists in this region, but it is linearly unstable, as in Figure~\ref{fig:flow_map}.  Changing $v$ also causes the curves NC2 and NC3 to shift slightly, but these are small effects compared to the shift in NC1.

\section{Conclusions}

Summarizing, we have revisited the problem of the one-dimensional forced Cahn--Hilliard equation with travelling-wave forcing, as a model for phase separation.  Steady-state travelling wave solutions emerge from the model equation, as evidenced by transient numerical simulations.  In contrast to the earlier studies (e.g. References~\cite{weith2009traveling,naraigh2008bounds}), we look at the mean concentration level as a key parameter.  This enables us to characterize the travelling-wave solutions in depth -- using both analytical and numerical techniques.  In the limiting case where the phase-separation scale $\epsilon$ tends to zero, we have identified certain regions of parameter space where a highly simplified, reduced-order Cahn--Hilliard model pertains; in the other regions, the full Cahn--Hilliard equation is required.  In these other regions, we have used a type of singular perturbation theory to demonstrate the existence of a whole zoo of multiple-spiked solutions, all of which are unstable.  These are interesting from a mathematical point of view, as they can be related to solutions of a forced Duffing Oscillator; they   may be of further interest in characterizing the transient dynamics of the forced Cahn--Hilliard phase separation, e.g. in those further regions of parameter space where no steady-state stable travelling-wave solutions exist.

\subsection*{Acknowledgements}

LON thanks the staff of the Bray Institute for Advanced Studies for their hospitality during the course of this research.  LON also acknowledges insightful discussions with Ted Cox.

\appendix

\section{Detailed description of the numerical methodologies}
\label{app:methodology}

We develop in detail the various numerical methods used to Equation~\eqref{eq:ch_tw} and its steady-state travelling-wave counterparts~\eqref{eq:ch_tw2} and~\eqref{eq:ch_reg}.  We refer frequently to these equations throughout this appendix, it is therefore helpful to recall them all together here as follows (notation as in the main paper, with $\gamma$ replaced by its dimensionless equivalent $\epsilon$): 
\begin{itemize}
\item Reduced-order model with steady-state travelling-wave solutions (i.e. Equation~\eqref{eq:ch_reg}):
\begin{equation}
0=
D\frac{\mathd}{\mathd\eta}\left(\psi_0^3-\psi_0\right)+v\left(\psi_0-\langle \psi\rangle\right)+f_0\sin(k\eta).
\label{eq:app:ch_reg}
\end{equation}
\item Full model with steady-state travelling-wave solutions (i.e. Equation~\eqref{eq:ch_tw2}):
\begin{equation}
\epsilon D \frac{\mathd^3\psi}{\mathd\eta^3}=
D\frac{\mathd}{\mathd\eta}\left(\psi^3-\psi\right)+v\left(\psi-\langle \psi\rangle\right)+f_0\sin(k\eta);
\label{eq:app:ch_tw2}
\end{equation}
\item Temporally-evolving equation with travelling-wave source (i.e. Equation~\eqref{eq:ch_tw}):
\begin{equation}
\frac{\partial C}{\partial t}=D\partial_{xx}\left(C^3-C-\epsilon\partial_{xx} C\right)+f_0k\cos[k(x-vt)];
\label{eq:app:ch_tw}
\end{equation}
\end{itemize}
Each of these ordinary differential equations (ODEs) is solved with $L$-periodic boundary conditions in either the variable $\eta$ (Equations~\eqref{eq:app:ch_reg},\eqref{eq:app:ch_tw2}), or the variable $x$ (Equation~\eqref{eq:app:ch_tw}).

\subsection{Steady-state travelling-wave solutions}

We begin by solving Equation~\eqref{eq:app:ch_reg}.  The ODE is solved in a straightforward fashion using an eighth-order accurate Runge-Kutta scheme~\cite{ode87}. The periodic boundary conditions are imposed numerically using a `shooting' method: a variable boundary condition $\psi(\eta=0)=a$ is imposed.  The solution of the ODE then produces an $a$-dependent concentration profile $\psi(\eta;a)$.  The value of $a$ is then adjusted such that the periodic boundary conditions hold, i.e.
\begin{equation}
\psi(L,a)=a.
\label{eq:app:root}
\end{equation}
Thus, the problem of enforcing the boundary condition on the ODE~\eqref{eq:app:ch_reg} is reduced to rootfinding (i.e. Equation~\eqref{eq:app:root}); this can be achieved using standard numerical techniques.

A second independent method is introduced to solve Equation~\eqref{eq:app:ch_reg}.  This is used in the main text to confirm results; this second method also carries over very straightforwardly to the full model problem~\eqref{eq:app:ch_tw}.  As such, the solution of Equation~\eqref{eq:app:ch_reg} is discretized at $N$ equally-spaced points $\eta_i=i(L/N)$, with $i\in \{1,2,\cdots,N\}$ and $\Delta \eta=\eta_2-\eta_1$.  As such, a numerical solution is generated with value $\psi_i$ at the corresponding point $\eta_i$.  Furthermore, an $O(\Delta \eta^4)$-accurate to $\mathd\psi/\mathd \eta$ is introduced using a finite-difference method:
\begin{equation}
\left(\frac{\mathd\psi}{\mathd\eta}\right)_{\eta=\eta_i}=
\frac{\tfrac{1}{12}\psi_{i-2}-\tfrac{2}{3}\psi_{i-1}+\tfrac{2}{3}\psi_{i+1}-\tfrac{1}{12}\psi_{i+2}}{\Delta \eta^2}+O(\Delta \eta^4),\qquad i=3,4,\cdots,N-2.
\label{eq:app:d1}
\end{equation}
Suitable modifications are made to Equation~\eqref{eq:app:d1} near the boundary at $i=1,2,N-1,N$ to account for the periodic boundary conditions.  Equation~\eqref{eq:app:d1} defines a differentiation operator (matrix) $\mydiff$: if $\mypsivec=(\psi_1,\cdots,\psi_N)^T$, then the derivative vector is defined in an obvious way as $\mydiff\mypsivec$.  As such, a discretized version of Equation~\eqref{eq:app:ch_reg} is developed:
\begin{equation}
\myFvec(\mypsivec)=D \mydiff\left[\mypsivec\bullet\mypsivec\bullet\mypsivec-\mypsivec\right]+v\left[\mypsivec-
\langle \psi\rangle\myonevec\right]+\myfvec,
\qquad
\myFvec(\mypsivec)=0,
\label{eq:app:newton}
\end{equation}
where $\myonevec=(1,\cdots,1)^T$, $\myfvec=f_0\left(\sin(k\eta_1),\cdots,\sin(k\eta_N)\right)^T$, and the $\bullet$ denotes pointwise multiplication of vectors.

Equation~\eqref{eq:app:newton} is a set of $N$ nonlinear algebraic equations, i.e. $\myFvec(\mypsivec)=0$.  These are solved using a Newton-type algorithm, which we outline as follows.  We note first of all that the solution is contained in a high-dimensional space $\mathbb{R}^N$.  An initial guess $\mypsivecn$ for the solution is prescribed:
\begin{equation}
\mypsivecn=(\psi_1^n,\cdots,\psi_N^n).
\end{equation}
From this, a new guess $\mypsivecnpone$ is constructed by moving away from the initial guess, in a particular direction in $\mathbb{R}^N$.  The direction is given by the Jacobian of the nonlinear equations~\eqref{eq:app:newton}, specifically,
\begin{equation}
\myjac=D\mydiff\myS+v\mathbb{I}_{N\times N},
\end{equation}
where $\myS$ is a diagonal matrix with entries
\begin{equation}
(\myS)_{ii}=3(\psi_i^n)^2-1
\end{equation}
As such, the updated guess is given by
\begin{equation}
\mypsivecnpone=\mypsivecn+\delta\mypsivec,\qquad
\delta\mypsivec=-\myjac^{-1}\left[\myFvec(\mypsivecn)\right].
\label{eq:app:newt_alg}
\end{equation}
Equation~\eqref{eq:app:newt_alg} is the standard Newton's method for solving the system $\myFvec(\mypsivec)=0$.  The iterative process 
in~\eqref{eq:app:newt_alg} is continued  until the residual
\[
f[\mypsivecn]=\tfrac{1}{2}[\myFvec(\mypsivecn)]\cdot[\myFvec(\mypsivecn)]
\]
is zero, to within a small tolerance.

In practice, Equation~\eqref{eq:app:newton} appears to have a unique solution, corresponding to the (apparently) unique solution of the original boundary-value problem~\eqref{eq:app:ch_reg}
However, we will extend the method to other scenarios where several distinct solutions definitely exist.  In such a scenario, the initial guess may be close to a number of solutions, and control may be lost over the solution to which the algorithm converges.  To regain  control, we modify the basic Newton's method to add backtracking line search functionality~\cite{press1996numerical}.  As such, we introduce
\begin{equation}
f_0=f[\mypsivecn],\qquad f_k=f[\mypsivecnpone].
\label{eq:fk0}
\end{equation}
Let $\alpha_k=1$.  If
\[
f_k>f_0+c\alpha_k \left[\myFvec(\mypsivecn)\cdot\delta\mypsivec\right],
\qquad c=\mathrm{Const.}
\]
we reject the updated guess $\mypsivecnpone=\mypsivecn+\delta\mypsivec$.  We reduce $\alpha_k$ by letting $\alpha_k\rightarrow \rho\alpha_k$ (with $\rho<1$) and we compute a revised updated guess
\begin{equation}
\mypsivecnpone=\mypsivecn+\alpha_k\left(\delta\mypsivec\right).
\label{eq:fk}
\end{equation}
We also recompute $f_k$ using Equations~\eqref{eq:fk0}--\eqref{eq:fk}.  This defines a second iterative process, i.e. an inner iterative process, which continues until  $f_k\leq f_0$.  At the termination of each round of the inner iterative process, we continue with the next step of the outer iterative process over the iteration variable $n$.  The entire set of nested iterative processes is continued until $f[\mypsivecn]$ is zero, to within a tolerance.  The values of $c$ and $\rho$ are picked by trial and error, with reference to standard 
practice~\cite{linesearch}: specifically, we take $c=10^{-4}$ and $\rho=0.5$.

A sample result showing a comparison between $\psi$-profiles generated by the `shooting' method and the Newton solver is shown in Figure~\ref{fig:validate}.  The exact agreement between the methods demonstrated in this figure provides evidence that the two independent numerical methods have been implemented correctly.
\begin{figure}
	\centering
		\includegraphics[width=0.6\textwidth]{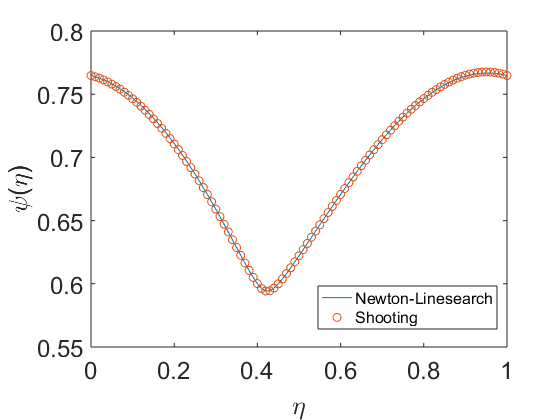}
		\caption{Comparison of the $\psi$-profiles for the two different solution methods: `shooting' method and Newton solver with backtracking line search.  Parameter values: $\langle \psi\rangle=0.7$, $f_0=1$, $v=1$.  In the Newton solver, $N=500$ gridpoints have been used in the spatial discretization.  The  residual $\|\mypsivecnpone-\mypsivecn\|_2/N$ is zero after 30 iterations of the solver.}
	\label{fig:validate}
\end{figure}
We emphasize that the above method (Newton solver with backtracking line search) can be extended in a straightforward fashion to the full model by adding a discretized third-order derivative to Equation~\eqref{eq:app:newton}.  This modification is so straightforward that no further explanation is required.

\subsection{Temporally-evolving solutions}

We start with Equation~\eqref{eq:app:ch_tw}, which we rewrite in the moving frame $\eta=x-vt$ as
\begin{equation}
\frac{\partial C}{\partial t}-v\frac{\partial C}{\partial \eta}=D\partial_{\eta\eta}\left(C^3-C-\epsilon\partial_{\eta\eta} C\right)+f_0k\cos(k\eta),\qquad
C(\eta+L,t)=C(\eta).
\label{eq:app:ch_tw_eta}
\end{equation}
Because of the periodic boundary conditions in Equation~\eqref{eq:app:ch_tw_eta}, we can expand the solution $C(\eta,t)$ in a Fourier series,
\[
C(\eta,t)=\sum_{\myk=-\infty}^\infty \mychat_{\myk}(t)\mathe^{\imag(2\pi/L)\myk\eta},
\]
where
\[
\mychat_{\myk}(t)=\frac{1}{L}\int_0^L \mathe^{-\imag(2\pi/L)\myk\eta}C(\eta,t)\mathd\eta.
\]
We further multiply Equation~\eqref{eq:app:ch_tw_eta} by 
$\mathe^{\imag(2\pi/L)\myk\eta}$ and integrate over $[0,L]$.  We obtain
\begin{equation}
\frac{\mathd \mychat_{\myk}}{\mathd t}=
-\epsilon D (2\pi/L)^4\myk^4\mychat_{\myk}
- D(2\pi/L)^2\myk^2\int_0^L \mathe^{\imag (2\pi/L)\myk \eta}(u^3-u)\mathd \eta
+\tfrac{1}{2}(f_0 k)\left(\delta_{\myk,1}+\delta_{\myk,-1}\right).
\label{eq:fkppperiodic1}
\end{equation}
We introduce
\[
Q=u^3-u,\qquad \widehat{Q}_{\myk}=\int_0^L \mathe^{\imag (2\pi/L)\myk \eta}(u^3-u)\mathd \eta
\]
Assuming perfect knowledge of all the Fourier coefficients of $u$ and $Q$, Equation~\eqref{eq:fkppperiodic1} can be discretized in time by definining a solution at discrete time points
\[
\mychat_{\myk}^{n}=\mychat_{\myk}(t=n\Delta t),\qquad n\in\{0,1,2,\cdots\},
\]
where $\Delta t$ is the timestep.  For numerical stability~\cite{Zhu_numerics}, Equation~\eqref{eq:fkppperiodic1} is discretized using a backward-Euler scheme:
\[
\frac{\mychat_{\myk}^{n+1}-\mychat_{\myk}^{n}}{\Delta t}=
-\epsilon D (2\pi/L)^4 \myk^4 \mychat_{\myk}^{n+1}  - (2\pi/L)^2\myk^2 \widehat{Q}_{\myk}^n+\tfrac{1}{2}(f_0 k)\left(\delta_{\myk,1}+\delta_{\myk,-1}\right).
\]
hence
\begin{equation}
\mychat_{\myk}^{n+1}=\frac{\mychat_{\myk}^{n}- \Delta t\,\widehat{Q}_{\myk }^n + \tfrac{1}{2}\Delta t(f_0 k)\left(\delta_{\myk,1}+\delta_{\myk,-1}\right)}{1+\epsilon D \Delta t (2\pi/L)^4\myk^4}.
\label{eq:app:timestep}
\end{equation}

We now solve an approximation of Equation~\eqref{eq:fkppperiodic1} numerically, whereby only $N$ modes are used.  As such,  we truncate the Fourier expansions such that $|\myk|<N/2$.  Hence, we replace the Fourier transform of $C(\eta,t)$ with the discrete (fast) Fourier transform analogue, to produce the following algorithm:
\begin{enumerate}
\item Set $n=0$.  Start with initial data $C(\eta,t=0)$, and $Q=C^3(\eta,t=0)-C(\eta,t=0)$ perform a discrete Fourier transform to obtain $\mychat_{\myk}^{n}$ and $\widehat{Q}_{\myk}^{n}$
\item Obtain $\widehat{u}_{\myk}^{n+1}$ from Equation~\eqref{eq:app:timestep}.
\item Perform the inverse Fourier transform to obtain $C(\eta,t=(n+1)\Delta t)$ and hence, 
\[
Q=u^3(\eta,t=(n+1)\Delta t)-C(\eta,t=(n+1)\Delta t).
\]
\item Increment the counter $n$ and repeat steps 2--3 until the final simulation time is reached.
\end{enumerate}
This is an efficient algorithm, as the differentiation $\partial_\eta^{2p}$ is carried out in Fourier space, where it manifests itself as multiplication ($\partial_\eta^{2p}\rightarrow (-1)^p (2\pi/L)^{2p}\myk^{2p}$).  Equally, the convolution
\[
\widehat{Q}_{\myk}=\int_0^L \mathe^{\imag (2\pi/L)\myk \eta}(u^3-u)\mathd \eta
=\left(\sum_{\myk'}\sum_{\myk''} \mychat_{\myk'}\mychat_{\myk''}\mychat_{\myk-\myk'-\myk''}\right)-\mychat_{\myk}
\]
is carried out in real space, where it manifests itself just as ordinary multiplication, i.e. $Q=u^3-u$.  As such, the algorithm is pseudospectral -- it is not a fully spectral algorithm, as the numerical solution is not computed entirely in terms of the Fourier amplitudes $\widehat{u}_{\myk}$.  Instead, at each timestep, we transform back into real space, where $Q$ is computed highly efficiently.  One then reverts to spectral (Fourier) space for the next timestep.

\begin{figure}
  \subfigure[]{\includegraphics[width=0.48\textwidth]{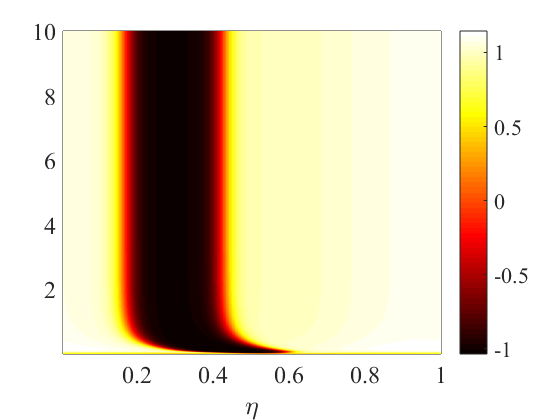}}
  \subfigure[]{\includegraphics[width=0.48\textwidth]{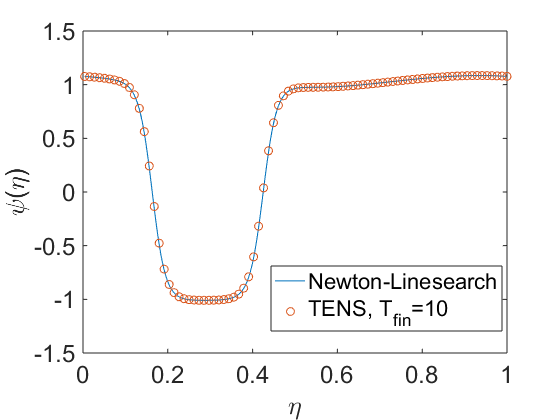}}
  \caption{Sample transiently-evolving numerical simulation (TENS) results for the case $\langle \psi\rangle=0.5$ and $f_0=1.5$.  Also, $v=1$, and $\epsilon=5\times 10^{-4}$.  Panel (a) shows the spacetime evolution of the concentration profile $C(\eta,t)$, up to a final time $T_{\mathrm{fin}}=10$.  Panel (b) shows a snapshot of the concentration at the final time, and a comparison with a steady travelling-wave profile generated with the Newton solver.  A timestep $\Delta t=10^{-4}$ is used.  Also, $N=256$ gridpoints are used in both numerical methods.}
  \label{fig:validation_spacetime}
\end{figure}
A sample implementation of the above pseudospectral algorithm is shown in Figure~\ref{fig:validation_spacetime}.  The initial condition is chosen such that $C(\eta,t=0)=r+\langle \psi\rangle$, where $r\in [-0.1,0.1]$ is a random number generated independently at each spatial position $\eta$.  This represents a fluctuation around the prescribed mean value of the concentration.  The spacetime evolution of the concentration profile is shown in 
Figure~\ref{fig:validation_spacetime}(a).  The solution evolves away from the random initial condition and forms a steady travelling wave (the spacetime evolution is shown in the frame moving with the wave, i.e. in $\eta-t$ variables).  The steady solution agrees exactly with the steady-state solution computed directly via the Newton linesearch method (panel (b)), confirming the correctness of the two distinct numerical methods.



\end{document}